\newtheorem{thm}{\sc Theorem.}[section]
\newtheorem{lem}[thm]{\sc Lemma.}
\newtheorem{rem}[thm]{\sc Remark.}
\newenvironment{AMS}%
{{\upshape\bfseries AMS subject classifications. }\ignorespaces}{}
\newenvironment{keywords}{{\upshape\bfseries Key words. }\ignorespaces}{}
\newcommand{\bRplus}{{\mathbb R}_{>0}}
\newcommand{\bRgeq}{{\mathbb R}_{\geq 0}}
\newcommand{\RZ}{{\mathbb R} \slash {\mathbb Z}}
\newcommand{\bR}{{\mathbb R}}
\newcommand{\bN}{{\mathbb N}}
\newcommand{\ratio}{{\mathfrak r}}
\newcommand{\dH}[1]{\;{\rm d}{\mathcal{H}}^{#1}} 
\newcommand{\drho}{\;{\rm d}\rho}
\newcommand{\dt}{\;{\rm d}t}
\newcommand{\Vh}{\underline{V}^h}
\newcommand{\Vhpartialzero}{\underline{V}^h_{\partial_0}}
\newcommand{\Vpartialzero}{\underline{V}_{\partial_0}}
\newcommand{\id}{{\rm id}}
\newcommand{\dd}[1]{\frac{\rm d}{{\rm d}#1}}
\newcommand{\ddt}{\dd{t}}
\newcommand{\x}{\vec x}
\newcommand{\ek}{e}
\newcommand{\ttau}{\Delta t}
\def\epsilon{\varepsilon}
\begin{document}
\title{
A finite element error analysis for \\ axisymmetric mean curvature flow%
\thanks{John passed away on 30 June 2019, when this manuscript was nearly 
completed. We dedicate this article to his memory.}
}
\author{John W. Barrett\footnotemark[2] \and 
        Klaus Deckelnick\footnotemark[3]\ \and 
        Robert N\"urnberg\footnotemark[2]}

\renewcommand{\thefootnote}{\fnsymbol{footnote}}
\footnotetext[2]{Department of Mathematics, 
Imperial College London, London, SW7 2AZ, UK}
\footnotetext[3]{Institut f\"ur Analysis und Numerik,
Otto-von-Guericke-Universit\"at Magdeburg, 
39106 Magdeburg, Germany}

\date{}

\maketitle

\begin{abstract}
We consider the numerical approximation of axisymmetric mean curvature flow
with the help of linear finite elements. In the case of a closed 
genus-1 surface, we derive optimal error bounds with respect to the $L^2$-- 
and $H^1$--norms for a fully discrete approximation. 
We perform convergence experiments to
confirm the theoretical results, and also present numerical simulations for
some genus-0 and genus-1 surfaces, including for the Angenent torus.
\end{abstract} 

\begin{keywords} mean curvature flow, axisymmetry, finite element method, 
error analysis, Angenent torus
\end{keywords}

\begin{AMS} 
65M60, 65M12, 65M15, 53C44, 35K55
\end{AMS}
\renewcommand{\thefootnote}{\arabic{footnote}}

\setcounter{equation}{0}
\section{Introduction} \label{sec:intro}

Mean curvature flow is one of the simplest prototypes for a geometric evolution
equation, and it has been studied extensively in differential geometry and
numerical analysis over the last few decades. 
For a family of closed hypersurfaces
$(\mathcal{S}(t))_{t \geq 0} \subset \bR^3$ 
this geometric evolution law is given by
\begin{equation} \label{eq:mcfS}
\mathcal{V}_{\mathcal{S}} = k_m\qquad\text{ on } \mathcal{S}(t)\,,
\end{equation}
where $\mathcal{V}_{\mathcal{S}}$ denotes the normal velocity of 
$\mathcal{S}(t)$ in the direction of the normal $\vec\nu_{\mathcal{S}(t)}$,
and $k_m$ is the mean curvature of $\mathcal{S}(t)$, i.e.\ the sum of its
principal curvatures. 
For an introduction to mean curvature flow and important results we refer to 
\cite{Mantegazza11}.

The approximation of mean curvature flow for two-dimensional surfaces with the
help of parametric finite elements goes back to the seminal work by
\cite{Dziuk91}, and other methods have since been proposed, see e.g.\
\cite{gflows3d,ElliottF17}. The first convergence proof for a parametric method has only very recently been obtained in \cite{KovacsLL19} for
a system that employs the position, the normal vector and the mean curvature as variables. Optimal $H^1$--error bounds are proven for a spatial
discretization by surface finite elements of order $k \geq 2$ and backward difference formulae for time integration. For surfaces that can be
written as a graph, error bounds have previously been shown in 
\cite{DeckelnickD95b,DeckelnickD00}. We refer to the review
articles \cite{DeckelnickDE05,bgnreview} 
for further information on the numerical approximation
of mean curvature flow and related geometric evolution equations, where the 
former article also surveys level set and phase field methods
not discussed in this introduction.

In this paper we consider mean curvature flow in an axisymmetric setting. We set
\[
I = \RZ\,, \text{ with } \partial I = \emptyset\,,\quad \text{or}\quad
I = (0,1)\,, \text{ with } \partial I = \{0,1\}\,,
\]
and let $\vec x(t) : \overline I \to \bRgeq\times \bR$ 
parameterize $\Gamma(t)$, which is the generating curve of a
surface $\mathcal{S}(t)$ 
that is axisymmetric with respect to the $x_2$--axis, see
Figure~\ref{fig:sketch}. 
\begin{figure}
\center
\newcommand{\AxisRotator}[1][rotate=0]{%
    \tikz [x=0.25cm,y=0.60cm,line width=.2ex,-stealth,#1] \draw (0,0) arc (-150:150:1 and 1);%
}
\newcommand{\AxisRotatorSmall}[1][rotate=0]{%
    \tikz [x=0.2cm,y=0.48cm,line width=.2ex,-stealth,#1] \draw (0,0) arc (-150:150:1 and 1);%
}
\begin{tikzpicture}[every plot/.append style={very thick}, scale = 0.7]
\begin{axis}[axis equal,axis line style=thick,axis lines=center, xtick style ={draw=none}, 
ytick style ={draw=none}, xticklabels = {}, 
yticklabels = {}, 
xmin=-0.2, xmax = 0.8, ymin = -0.4, ymax = 2.55]
after end axis/.code={  
   \node at (axis cs:0.0,1.5) {\AxisRotator[rotate=-90]};
   \draw[blue,->,line width=2pt] (axis cs:0,0) -- (axis cs:0.5,0);
   \draw[blue,->,line width=2pt] (axis cs:0,0) -- (axis cs:0,0.5);
   \node[blue] at (axis cs:0.5,-0.2){$\vec\ek_1$};
   \node[blue] at (axis cs:-0.2,0.5){$\vec\ek_2$};
   \draw[red,very thick] (axis cs: 0,0.7) arc[radius = 70, start angle= -90, end angle= 90];
   \node[red] at (axis cs:0.7,1.9){$\Gamma$};
}
\end{axis}
\end{tikzpicture} \qquad \qquad
\tdplotsetmaincoords{120}{50}
\begin{tikzpicture}[scale=1.4, tdplot_main_coords,axis/.style={->},thick]
\draw[axis] (-1, 0, 0) -- (1, 0, 0);
\draw[axis] (0, -1, 0) -- (0, 1, 0);
\draw[axis] (0, 0, -0.2) -- (0, 0, 2.7);
\draw[blue,->,line width=1.4pt] (0,0,0) -- (0,0.5,0) node [below] 
{\small{$\vec\ek_1$}};
\draw[blue,->,line width=1.4pt] (0,0,0) -- (0,0.0,0.5);
\draw[blue,->,line width=1.4pt] (0,0,0) -- (0.5,0.0,0);
\node[blue] at (0.2,0.4,0.1){\small{$\vec\ek_3$}};
\node[blue] at (0,-0.2,0.3){\small{$\vec\ek_2$}};
\node[red] at (0.7,0,1.9){\small{$\mathcal{S}$}};
\node at (0.0,0.0,2.4) {\AxisRotatorSmall[rotate=-90]};

\tdplottransformmainscreen{0}{0}{1.4}
\shade[tdplot_screen_coords, ball color = red] (\tdplotresx,\tdplotresy) circle (0.7);
\end{tikzpicture}
\caption{Sketch of $\Gamma$ and $\mathcal{S}$, as well as 
the unit vectors $\vec\ek_1$, $\vec\ek_2$ and $\vec\ek_3$.}
\label{fig:sketch}
\end{figure}%
Here we allow $\Gamma(t)$ to be either a closed curve, parameterized over
$\RZ$, which corresponds to $\mathcal{S}(t)$ being a genus-1 surface
without boundary.
Or $\Gamma(t)$ may be an open curve, parameterized over $[0,1]$,
which corresponds to $\mathcal{S}(t)$ being a genus-0 surface, e.g.\ a sphere.
Then the two endpoints of $\Gamma(t)$ are attached to the $x_2$--axis, 
on which they can freely move up and down. 
In particular, we always assume that, for all $t \in [0,T]$,
\begin{subequations} 
\begin{align} 
\vec x(\rho,t)\cdot\vec\ek_1 & > 0 \quad 
\forall\ \rho \in \overline I\setminus \partial I\,,\label{eq:xpos} \\
\vec x(\rho,t)\cdot\vec\ek_1 &= 0 \quad 
\forall\ \rho \in \partial I\,.\label{eq:axibc} 
\end{align}
\end{subequations}
Let us denote by $\vec\tau$ and $\vec\nu$ a unit tangent and a unit normal to $\Gamma(t)$, respectively.
It was shown in \cite{aximcf} that mean curvature flow, \eqref{eq:mcfS}, 
for an axisymmetric surface can be formulated as 
\begin{subequations} 
\begin{alignat}{2} 
\vec x_t\cdot\vec\nu & = \varkappa - 
\frac{\vec\nu\cdot\vec\ek_1}{\vec x\cdot\vec\ek_1} \quad && \text{in } I
\times (0,T]\,,\label{eq:xt} \\
\vec x_\rho \cdot\vec\ek_2 & = 0 \quad && \text{on } \partial I \times (0,T]\,,
\label{eq:bc}
\end{alignat}
\end{subequations}
where $\varkappa = \vec\varkappa\cdot\vec\nu$ and 
$\vec\varkappa = \frac1{|\vec x_\rho|} 
\left( \frac{\vec x_\rho}{|\vec x_\rho|} \right)_\rho$ is the curvature vector.
In particular, we observe that a solution of the system  
\begin{equation} \label{eq:Dstrongnew}
\vec x_t 
- \frac1{|\vec x_\rho|} \left( \frac{\vec x_\rho}{|\vec x_\rho|} \right)_\rho 
+ \frac{\vec\nu\cdot\vec\ek_1}{\vec x\cdot\vec\ek_1}\,\vec\nu = \vec 0
\end{equation}
will satisfy (\ref{eq:xt}). Note that the leading part in the above problem
is the same as in the curve shortening flow
\begin{equation} \label{eq:csf}
\vec x_t 
- \frac1{|\vec x_\rho|} \left( \frac{\vec x_\rho}{|\vec x_\rho|} \right)_\rho 
= \vec 0\,.
\end{equation}
Hence it is natural to use numerical methods designed for (\ref{eq:csf}), 
in order to approximate axisymmetric mean curvature flow. 
Optimal error bounds for a semi--discrete scheme approximating (\ref{eq:csf}) 
have been obtained in \cite{Dziuk94}, and various schemes based on 
(\ref{eq:Dstrongnew}) are considered in \cite{aximcf}, although no error 
analysis is given. A difficulty of the approaches using
(\ref{eq:csf}) and (\ref{eq:Dstrongnew}) lies in the fact that, in view of 
\[
\frac1{|\vec x_\rho|} \left( \frac{\vec x_\rho}{|\vec x_\rho|} \right)_\rho 
= \frac1{|\vec x_\rho|^2} \left[ \vec x_{\rho\rho} 
- (\vec x_{\rho\rho} \cdot\vec \tau)\,\vec \tau \right] ,
\]
the resulting systems are degenerate in the tangential direction. 
One way to address this problem is DeTurck's trick, 
which essentially consists in removing this degeneracy by introducing an 
additional tangential motion via a suitable reparameterization. 
In the case of the curve shortening flow, it is natural to consider the 
system 
\begin{equation}  \label{eq:csfdeTurck}
\vec x_t - \frac{\vec x_{\rho \rho}}{|\vec x_\rho|^2} = \vec 0\,,
\end{equation}
for which a semi--discretization by linear finite elements was analyzed in
\cite{DeckelnickD95}. A whole family of schemes based on DeTurck's trick 
were introduced in \cite{ElliottF17}, both for curves and surfaces. 
It turns out that for curves the analysis of \cite{DeckelnickD95} 
can be generalized to these methods. \\
The application of DeTurck's trick to our problem amounts to replacing 
(\ref{eq:Dstrongnew}) by the system
\begin{equation} \label{eq:newsystem} 
\vec x_t - \frac{\vec x_{\rho \rho}}{|\vec x_\rho|^2} +
\frac{\vec\nu\cdot\vec\ek_1}{\vec x\cdot\vec\ek_1}\,\vec\nu = \vec 0\,.
\end{equation}
If we multiply this equation by 
$\vec x \cdot \vec\ek_1\,| \vec x_\rho |^2$ and note that $\vec\ek_1 = (\vec\nu\cdot\vec\ek_1)\,\vec\nu +
|\vec x_\rho|^{-2}\,(\vec x_\rho\cdot\vec\ek_1)\,\vec x_\rho$, we are led to the following system of PDEs
\begin{equation} \label{eq:DDstrong3}
\vec x\cdot\vec\ek_1\,|\vec x_\rho|^2\,\vec x_t - 
((\vec x\cdot\vec\ek_1)\,\vec x_{\rho})_\rho +
|\vec x_\rho|^2\,\vec\ek_1 = \vec 0\,,
\end{equation}
on which we base the numerical scheme to be analyzed in this paper. 
In Section~\ref{sec:weak} we derive a weak formulation of 
\eqref{eq:DDstrong3} together with a natural fully discrete approximation
using linear finite elements in space and a backward Euler scheme in time. 
The method is semi--implicit and hence requires the solution of a linear 
problem at each time step. In Section~\ref{sec:error} we prove the main 
result of our paper, which are optimal error bounds both in $H^1$ and in $L^2$
in the case $I = \RZ$, i.e.\ for genus-1 surfaces. Let us remark that on using 
our techniques, it is straightforward to
also obtain optimal $L^2$--error bounds for a 
fully discrete approximation of the curve shortening flow, something that
to the best of our knowledge has not yet appeared in the literature.
In Section~\ref{sec:alt} we briefly discuss an alternative formulation of
axisymmetric mean curvature flow and the associated discretization.
This alternative formulation has the advantage of allowing for an unconditional
stability estimate for the fully discrete approximation. However, in
contrast to the scheme derived in Section~\ref{sec:weak}, it appears that the
formulation is only well-posed in the case $I = \RZ$.
Finally,
in Section~\ref{sec:nr} we perform some numerical experiments to investigate
the robustness and the accuracy of the introduced scheme,
and to study some phenomena of interest in differential geometry, 
for example the Angenent torus.

We end this section with a few comments about notation. 
The $L^2$--inner product on $I$ is denoted by $(\cdot,\cdot)$.
We adopt the standard notation for Sobolev spaces, denoting the norm of
$W^{\ell,p}(I)$ ($\ell \in \bN_0$, $p \in [1, \infty]$)
by $\|\cdot \|_{\ell,p}$ and the 
semi-norm by $|\cdot |_{\ell,p}$. For
$p=2$, $W^{\ell,2}(I)$ will be denoted by
$H^{\ell}(I)$ with the associated norm and semi-norm written as,
respectively, $\|\cdot\|_\ell$ and $|\cdot|_\ell$.
The above are naturally extended to vector functions, and we will write 
$[W^{\ell,p}(I)]^2$ for a vector function with two components.
In addition, we adopt the standard notation $W^{\ell,p}(a,b;B)$
($\ell \in \bN$, $p \in [1, \infty]$, $(a,b)$ an interval in $\bR$, 
$B$ a Banach space) for time dependent spaces
with norm $\|\cdot\|_{W^{\ell,p}(a,b;B)}$.
Once again, we write $H^{\ell}(a,b;B)$ if $p=2$. 
Furthermore, $C$ denotes a generic positive constant independent of 
the mesh parameter $h$ and the time step size $\ttau$, see below.
For later use we recall the well--known Sobolev embedding
\begin{equation} \label{eq:sobolev}
|f|_{0,\infty} \leq C\,\|f\|_1 \qquad \forall\ f \in H^1(I)\,.
\end{equation}

\setcounter{equation}{0}
\section{Weak formulation and finite element discretization} \label{sec:weak}

Let 
$\Vpartialzero = \{ \vec\eta \in [H^1(I)]^2 : \vec\eta\cdot\vec\ek_1 = 0
\text{ on } \partial I\}$.
A weak formulation for (\ref{eq:DDstrong3}) is given as
follows.

\noindent
$(\mathcal P)$
Let $\vec x(0) \in \Vpartialzero$. For $t \in (0,T]$
find $\vec x(t) \in \Vpartialzero$ such that
\begin{equation} \label{eq:DD}
 \left( (\vec x \cdot\vec\ek_1)\,\vec x_t,\vec\eta\,|\vec x_\rho|^2\right)
+ \left( (\vec x\cdot\vec\ek_1) \,\vec x_\rho,\vec\eta_\rho \right)
+ \left( \vec\eta\cdot\vec\ek_1, |\vec x_\rho|^2 \right)
 = 0
\qquad \forall\ \vec\eta \in \Vpartialzero\,.
\end{equation}
It can be shown that the weak formulation (\ref{eq:DD}), despite the 
degenerate weight in the second term, weakly enforces the boundary
condition (\ref{eq:bc}), see \citet[Appendix~A]{aximcf} for the necessary 
techniques.

In order to define our finite element approximation of $(\mathcal P)$, 
let $[0,1]=\bigcup_{j=1}^J I_j$, $J\geq3$, be a
decomposition of $[0,1]$ into intervals given by the nodes $q_j$,
$I_j=[q_{j-1},q_j]$. 
For simplicity 
we assume that the subintervals form an equipartitioning of $[0,1]$,
i.e.\ that 
\begin{equation*} 
q_j = j\,h\,,\quad \mbox{with}\quad h = \frac 1J\,,\qquad j=0,\ldots, J\,.
\end{equation*}
Clearly, if $I=\RZ$ we identify $0=q_0 = q_J=1$.

We define the finite element spaces
$V^h = \{\chi \in C(\overline I) : \chi\!\mid_{I_j} 
\mbox{ is affine},\ j=1,\ldots, J\}$,
$\Vh = [V^h]^2$ and $\Vhpartialzero = \Vh \cap \Vpartialzero$.
Let $\{\chi_j\}_{j=j_0}^J$ denote the standard basis of $V^h$,
where $j_0 = 0$ if $I = (0,1)$ and $j_0 = 1$ if $I=\RZ$.
For later use, we let $\pi^h:C(\overline I)\to V^h$ 
be the standard interpolation operator at the nodes $\{q_j\}_{j=0}^J$.
In addition, we introduce $Z^h= \{ \chi \in L^\infty(I): \chi\!\mid_{I_j} 
\mbox{ is constant},\ j=1,\ldots, J\}$ and define 
$P^h:L^1(I) \to Z^h$ by
\begin{equation} \label{eq:defph}
(P^h f)_{| I_j} = \frac1{h}\, \int_{I_j} f \drho\,, \quad j=1,\ldots,J.
\end{equation}
It is well--known that for 
$k \in \{ 0,1 \}, \ell \in \{ 1,2 \}, p \in [2,\infty]$ 
it holds that
\begin{subequations}
\begin{alignat}{2}
h^{\frac 1p - \frac 1r}\,| \eta |_{0,r} 
+ h\,| \eta |_{1,p} & \leq C\,| \eta |_{0,p} 
\qquad && \forall\ \eta \in V^h\,, \qquad r \in [p,\infty]\,,
\label{eq:inverse} \\
| f - \pi^h f |_{k,p} & \leq C\,h^{\ell-k}\, | f |_{\ell,p} 
\qquad && \forall\ f \in W^{\ell,p}(I)\,, \label{eq:estpih} \\
| f - P^h f |_{0,p} & \leq C\,h\,| f |_{1,p} 
\qquad && \forall\ f \in W^{1,p}(I)\,. \label{eq:estph}
\end{alignat}
\end{subequations}
In order to discretize in time, let $t_m=m\,\ttau$, $m=0,\ldots,M$, 
with the uniform time step $\ttau = \frac TM >0$. 
Throughout this paper, we make use of the following short hand notations.
For a function $f \in C([0,T];B)$, with some Banach space $B$, 
we let $f^m= f(t_m)$. In addition, for a sequence of functions 
$(g^m)_{m\in\bN_0}$, $g^m \in B$, we let
\[
 D_t g^{m+1}=\frac{g^{m+1}-g^{m}}\ttau\,.
\]
For two sequences, $(f^m)_{m\in\bN_0}$ and $(g^m)_{m\in\bN_0}$, we observe
the discrete product rule
\begin{equation} \label{eq:dpr}
D_t(f^{m+1}\, g^{m+1}) = (D_t f^{m+1})\, g^{m+1} + f^m\, D_t g^{m+1}\,,
\end{equation}
as well as the following useful summation by parts formula,
for $n = 0,\ldots,M-1$:
\begin{equation} \label{eq:sbp}
\ttau\, \sum_{m=0}^n D_t f^{m+1}\, g^m 
= f^{n+1}\, g^n - \ttau\, \sum_{m=1}^n f^m\, D_t g^m, \quad 
\mbox{ if } f^0=0\,.
\end{equation}
Moreover, it is not difficult to show that for any $f\in H^1(0,T;L^2(I))$ it
holds that
\begin{equation} \label{eq:Dtf}
\ttau\,\sum_{m=k}^{n} 
|D_t f^{m+1}|_0^2 \leq \int_{t_{k}}^{t_{n+1}} |f_t|_0^2 \dt\,,
\quad 0 \leq k \leq n \leq M-1\,.
\end{equation}

Our fully discrete finite element approximation of \eqref{eq:DD} is given as
follows.

\noindent
$(\mathcal P^{h,\ttau})$
Let $\vec X^0= \pi^h \vec x(0) \in \Vhpartialzero$. For $m=0,\ldots,M-1$, 
find $\vec X^{m+1}\in \Vhpartialzero$, such that
\begin{equation} \label{eq:DDlinear}
 \left((\vec X^m\cdot\vec\ek_1)\,D_t \vec X^{m+1} , \vec\eta\,|\vec X^m_\rho|^2
\right) + \left( (\vec X^m\cdot\vec\ek_1)\,
\vec X^{m+1}_\rho,\vec\eta_\rho \right)
+ \left( \vec\eta \cdot\vec\ek_1, |\vec X^{m}_\rho|^2\right)
= 0
\quad \forall\ \vec\eta \in \Vhpartialzero\,.
\end{equation}

\begin{lem} \label{lem:ex}
Let $\vec X^m \in \Vhpartialzero$ with $\vec X^m \cdot \vec\ek_1 > 0$
and $|\vec X^m_\rho| > 0$ in $I$.
Then there exists a unique solution 
$\vec X^{m+1} \in \Vhpartialzero$ to \eqref{eq:DDlinear}. 
\end{lem}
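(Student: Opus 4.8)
The plan is to recognize that, for fixed $\vec X^m$, equation \eqref{eq:DDlinear} is a finite-dimensional linear system in the coefficients of $\vec X^{m+1}$ with respect to the basis of $\Vhpartialzero$. Since the number of equations equals the number of unknowns, existence and uniqueness both follow from showing that the associated homogeneous system has only the trivial solution; equivalently, that the bilinear form
\[
a(\vec\zeta,\vec\eta) := \left((\vec X^m\cdot\vec\ek_1)\,\vec\zeta , \vec\eta\,|\vec X^m_\rho|^2\right) + \ttau\left((\vec X^m\cdot\vec\ek_1)\,\vec\zeta_\rho,\vec\eta_\rho\right)
\]
on $\Vhpartialzero \times \Vhpartialzero$ is such that $a(\vec\zeta,\vec\zeta) = 0$ forces $\vec\zeta = \vec 0$. (Here I have multiplied through by $\ttau$ and moved the $D_t$ expansion $D_t\vec X^{m+1} = \ttau^{-1}(\vec X^{m+1} - \vec X^m)$ so that the unknown is $\vec X^{m+1}$ and the $\vec X^m$-terms and the last term of \eqref{eq:DDlinear} go to the right-hand side as a fixed linear functional.)

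First I would set $\vec\eta = \vec\zeta$ in the homogeneous form and compute
\[
a(\vec\zeta,\vec\zeta) = \left( (\vec X^m\cdot\vec\ek_1)\,|\vec X^m_\rho|^2, |\vec\zeta|^2\right) + \ttau\left((\vec X^m\cdot\vec\ek_1), |\vec\zeta_\rho|^2\right).
\]
The crucial point is that the weight $w := (\vec X^m\cdot\vec\ek_1)\,|\vec X^m_\rho|^2$ in the first term and the weight $\vec X^m\cdot\vec\ek_1$ in the second are both strictly positive on $I$: by hypothesis $\vec X^m\cdot\vec\ek_1 > 0$ and $|\vec X^m_\rho| > 0$ on $I$, and since these are continuous (resp.\ piecewise constant and positive on each $I_j$) they are bounded below by a positive constant. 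Hence $a(\vec\zeta,\vec\zeta) = 0$ forces $\int_I w\,|\vec\zeta|^2\drho = 0$, which gives $\vec\zeta = \vec 0$ a.e., and since $\vec\zeta \in \Vh$ is continuous, $\vec\zeta \equiv \vec 0$. (The gradient term already forces $\vec\zeta$ to be piecewise constant, hence globally constant; the zeroth-order term then pins that constant to zero — either argument alone suffices.)

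I expect the only mild subtlety to be the bookkeeping at the boundary when $I = (0,1)$: one must check that the test space $\Vhpartialzero$ is genuinely the right finite-dimensional space, i.e.\ that imposing $\vec\eta\cdot\vec\ek_1 = 0$ at $\partial I$ removes exactly the components that would otherwise be uncontrolled, so that dimension counting (number of equations $=$ number of unknowns) is valid. This is automatic from the definition of $\Vhpartialzero$. Beyond that, the argument is entirely standard linear algebra once positivity of the weights is in hand, so there is no real obstacle; the positivity hypotheses on $\vec X^m$ in the statement are doing all the work, exactly as they must.
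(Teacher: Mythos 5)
Your proof is correct and follows exactly the paper's argument: the system is linear with a square matrix, so uniqueness implies existence; testing the homogeneous system with the unknown itself and invoking the strict positivity of $\vec X^m\cdot\vec\ek_1$ and $|\vec X^m_\rho|$ forces the solution to vanish. One small slip in your parenthetical: the gradient term alone does \emph{not} suffice, since it only shows $\vec\zeta$ is constant, and neither the periodicity (for $I=\RZ$) nor the boundary condition $\vec\zeta\cdot\vec\ek_1=0$ on $\partial I$ (for $I=(0,1)$) kills a constant vector; the zeroth-order term is essential, which is indeed the term the paper relies on.
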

\begin{proof}
As (\ref{eq:DDlinear}) is a linear system, where the
number of unknowns equals the number of equations,
it is enough to show uniqueness.
We hence consider the homogeneous system and assume that
$\vec X \in \Vhpartialzero$ is such that
\begin{equation} \label{eq:ex}
\left((\vec X^m\cdot\vec\ek_1)\,\vec X , \vec\eta\,|\vec X^m_\rho|^2\right)
+ \ttau \left( (\vec X^m\cdot\vec\ek_1)\,
\vec X_\rho,\vec\eta_\rho \right) = 0
\quad \forall\ \vec\eta \in \Vhpartialzero\,.
\end{equation}
Choosing $\vec\eta = \vec X$ in \eqref{eq:ex} yields that
\begin{equation*} 
\left(\vec X^m\cdot\vec\ek_1\,|\vec X|^2 , |\vec X^m_\rho|^2\right)
+ \ttau \left( \vec X^m\cdot\vec\ek_1 , |\vec X_\rho|^2 \right) = 0\,,
\end{equation*}
and our assumptions on $\vec X^m$ imply that $\vec X = \vec 0$. 
\end{proof}

We are now in a position to formulate the main result of this paper, which are optimal 
$H^1$-- and $L^2$--error bounds in the case of genus-1 surfaces.

\begin{thm} \label{thm:ebP}
Let $\partial I = \emptyset$.
Suppose that \eqref{eq:DDstrong3} has a solution $\vec x$ 
satisfying 
\begin{equation} \label{eq:regularity}
\vec x \in C([0,T];[W^{2,\infty}(I)]^2), \; \vec x_t \in L^2(0,T;[H^2(I)]^2), \; \vec x_{tt} \in L^2(0,T;[L^2(I)]^2)\,,
\end{equation}
as well as
\begin{equation} \label{eq:xrho}
|\vec x_\rho|  > 0, \quad \vec x \cdot \vec\ek_1 >0  \quad \text{in } \overline I \times [0,T]\,.
\end{equation}
Then there exist $h_0,\gamma \in \bRplus$ such that
if $0<h \leq h_0$ and $\ttau \leq \gamma\, \sqrt{h}$,
then $(\mathcal P^{h,\ttau})$ has a unique 
solution $(\vec X^{m})_{m=0,\ldots,M}$, and the following error bounds hold:
\begin{align}
\max_{m=0,\ldots,M} |  \vec x^m - \vec X^m |_0^2 
+ \ttau\, \sum_{m=0}^{M-1} |  \vec x_t^{m+1} - D_t \vec X^{m+1} |_0^2
& \leq C \left( h^4 + (\ttau)^2 \right),  \label{eq:ebP1} \\
\max_{m=0,\ldots,M} | \vec x^m - \vec X^m |_1^2 & \leq C \left( h^2 + (\ttau)^2 \right) .  \label{eq:ebP2}
\end{align}
\end{thm}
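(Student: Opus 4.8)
The plan is to run a discrete energy (Dziuk-type) estimate together with an induction on the time level $m$. Split the error as $\vec x^m - \vec X^m = \vec\rho^m + \vec E^m$ with $\vec\rho^m := \vec x^m - \pi^h\vec x^m$ and $\vec E^m := \pi^h\vec x^m - \vec X^m \in \Vhpartialzero$, noting that $\vec E^0 = \vec 0$. By \eqref{eq:estpih} and the regularity \eqref{eq:regularity}, the quantities $\vec\rho^m$ and $D_t\vec\rho^{m+1} = (\id-\pi^h)D_t\vec x^{m+1}$ are $O(h^2)$ in $L^2$ and $O(h)$ in $H^1$ (uniformly in $m$, resp.\ after weighting by $\ttau$ and summing, using \eqref{eq:Dtf}), so it remains to control $\vec E^m$. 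Since the coefficients in \eqref{eq:DDlinear} are evaluated at $\vec X^m$, I would argue inductively: as long as $\vec X^0,\dots,\vec X^n$ exist and the error bounds \eqref{eq:ebP1}--\eqref{eq:ebP2} hold up to level $n$, an inverse estimate (the obvious piecewise-constant variant of \eqref{eq:inverse}) applied to $\vec E^n_\rho$, together with \eqref{eq:sobolev} and the restriction $\ttau \le \gamma\,\sqrt h$, makes $\vec X^n$ $W^{1,\infty}$-close to $\vec x^n$; with \eqref{eq:xrho} this yields uniform bounds $0 < c_0 \le \vec X^n\cdot\vec\ek_1$ and $0 < c_0 \le |\vec X^n_\rho| \le C$ on $I$, so Lemma~\ref{lem:ex} provides $\vec X^{n+1}$ and the energy estimate below can be run on $\{0,\dots,n+1\}$. (The half-power $\sqrt h$ is dictated precisely by the factor $h^{-1/2}(h+\ttau)$ produced by this inverse estimate.)

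For the energy step, subtract \eqref{eq:DDlinear} from \eqref{eq:DD} taken at $t_{m+1}$, after replacing $\vec x_t^{m+1}$ by $D_t\vec x^{m+1}$ and inserting $\pi^h\vec x^{m+1}$. This gives, for all $\vec\eta\in\Vhpartialzero$,
\[
\bigl((\vec X^m\cdot\vec\ek_1)\,D_t\vec E^{m+1},\,\vec\eta\,|\vec X^m_\rho|^2\bigr)
+ \bigl((\vec X^m\cdot\vec\ek_1)\,\vec E^{m+1}_\rho,\,\vec\eta_\rho\bigr)
= \langle \mathcal{R}^{m+1},\vec\eta\rangle ,
\]
where $\mathcal{R}^{m+1}$ collects (i) the backward-Euler truncation $\vec x_t^{m+1}-D_t\vec x^{m+1}$, (ii) the errors caused by evaluating $\vec x\cdot\vec\ek_1$ and $|\vec x_\rho|^2$ at $(\vec X^m,t_m)$ rather than $(\vec x^{m+1},t_{m+1})$ --- which depend linearly on $\vec\rho^m$, $\vec E^m$ and on $\vec x^{m+1}-\vec x^m$, $\vec x_\rho^{m+1}-\vec x_\rho^m$ --- and (iii) the interpolation terms carrying $\vec\rho^{m+1}$ and $D_t\vec\rho^{m+1}$. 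Now take $\vec\eta = D_t\vec E^{m+1}$, multiply by $\ttau$ and sum over $m = 0,\dots,n-1$. The first term on the left is $\ge c_0\,\ttau\sum_m|D_t\vec E^{m+1}|_0^2$; for the second, the elementary identity $2\vec a\cdot(\vec a-\vec b) = |\vec a|^2-|\vec b|^2+|\vec a-\vec b|^2$ and a discrete Abel summation in $m$ (using $\vec E^0=\vec 0$) produce $\tfrac12\bigl(\vec X^{n-1}\cdot\vec\ek_1,|\vec E^n_\rho|^2\bigr) \ge \tfrac{c_0}{2}|\vec E^n_\rho|_0^2$ plus a remainder $-\tfrac{\ttau}{2}\sum_m\bigl(D_t(\vec X^m\cdot\vec\ek_1),|\vec E^m_\rho|^2\bigr)$ which, bounded via \eqref{eq:inverse}, \eqref{eq:Dtf} and the induction hypothesis, goes into the Gronwall sum. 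On the right, terms (i) and (ii) are paired with $D_t\vec E^{m+1}$ by Cauchy--Schwarz and Young: the truncation and the time-lag pieces contribute $O((\ttau)^2)$ (by \eqref{eq:Dtf} and \eqref{eq:regularity}), the pieces linear in $\vec\rho^m$, $\vec E^m$ give $O(h^4)$ together with a $\ttau\sum_m(|\vec E^m|_0^2+|\vec E^m_\rho|_0^2)$ Gronwall contribution, and an $\epsilon\,\ttau\sum_m|D_t\vec E^{m+1}|_0^2$ is absorbed on the left; the $D_t\vec\rho^{m+1}$ piece is $O(h^4)$ by \eqref{eq:estpih} and \eqref{eq:Dtf}.

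The one delicate residual term is the stiffness interpolation contribution $\ttau\sum_m\bigl((\vec x^{m+1}\cdot\vec\ek_1)\,\vec\rho^{m+1}_\rho,\,D_t\vec E^{m+1}_\rho\bigr)$: bounding it directly with an inverse estimate costs an $h^{-1}$ and would only give $O(h^2)$. Instead I would sum by parts in time, transferring $D_t$ off $\vec E_\rho$; this leaves the endpoint term $\bigl((\vec x^{n}\cdot\vec\ek_1)\vec\rho^{n}_\rho,\vec E^n_\rho\bigr)$ and a sum containing $D_t[(\vec x^{m+1}\cdot\vec\ek_1)\vec\rho^{m+1}_\rho]$ paired with $\vec E^m_\rho$. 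In each of these I integrate by parts in $\rho$ over the subintervals $I_j$; since $\partial I = \emptyset$, the interpolation errors $\vec\rho^{m}$ and $D_t\vec\rho^{m+1}$ vanish at every node $q_j$, so all boundary contributions disappear and one is left with $L^2$-pairings of $\vec\rho^{m}$, $D_t\vec\rho^{m+1}$ (size $O(h^2)$ by \eqref{eq:estpih}, \eqref{eq:Dtf}) against $\vec E^{m}_\rho$. Young's inequality then turns these into $O(h^4)$ plus a small multiple of $\max_{k\le n}|\vec E^k_\rho|_0^2$ (absorbed on the left, using that the estimate holds for every $n$) plus further Gronwall terms. This is exactly where the genus-1 hypothesis $\partial I=\emptyset$ enters, and note that the degenerate weight $\vec x\cdot\vec\ek_1$ is harmless here because it is bounded below by \eqref{eq:xrho}. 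The discrete Gronwall lemma now yields $|\vec E^n_\rho|_0^2 + \ttau\sum_{m=0}^{n-1}|D_t\vec E^{m+1}|_0^2 \le C(h^4+(\ttau)^2)$; since $\vec E^n = \ttau\sum_{m=0}^{n-1}D_t\vec E^{m+1}$, also $|\vec E^n|_0^2 \le T\,\ttau\sum_m|D_t\vec E^{m+1}|_0^2 \le C(h^4+(\ttau)^2)$. The triangle inequality with the interpolation estimates for $\vec\rho^n$ gives \eqref{eq:ebP1}--\eqref{eq:ebP2}, and for $h\le h_0$ and $\gamma$ small these are strong enough to re-establish the induction hypotheses at level $n$, so the range of admissible $n$ is all of $\{0,\dots,M\}$; uniqueness of $(\vec X^m)$ then follows from Lemma~\ref{lem:ex}. \textbf{The main obstacle} I expect is the simultaneous bookkeeping --- running the induction that confines $\vec X^m$ to the region where the coefficients are two-sided bounded, while tracking every consistency and nonlinear term of the discrete Gronwall inequality to the optimal orders $h^4$ and $(\ttau)^2$ --- in particular the time-summation-by-parts combined with node-vanishing integration by parts that upgrades the $L^2$ rate from $h^2$ to $h^4$.
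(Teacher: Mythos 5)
Your plan reproduces the paper's argument almost line for line: the same decomposition $\vec x^m-\vec X^m=\vec d^m+\vec E^m$ (the paper writes $\vec d^m$ for your $\vec\rho^m$), the same induction maintaining $\|\vec X^m\|_1\le 2C_0$, $c_0/2\le|\vec X^m_\rho|\le 2C_0$, $\vec X^m\cdot\vec\ek_1\ge c_1/2$, the same test function $\vec\eta=\ttau\,D_t\vec E^{m+1}$, the same discrete Abel summation, the same time--step restriction $\ttau\le\gamma\sqrt h$ justified by the inverse estimate $|\vec E^{n+1}_\rho|_{0,\infty}\le Ch^{-1/2}|\vec E^{n+1}_\rho|_0$, and the same discrete Gronwall closure. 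The one place where your mechanics differ from the paper's is the treatment of the stiffness interpolation remainder $T^2_2$: you sum by parts in time and then integrate by parts in $\rho$ over each $I_j$, using $\vec d^m(q_j)=0$ to kill the boundary terms, while the paper replaces the coefficient $\vec X^m\cdot\vec\ek_1$ by its elementwise average $P^h[\vec X^m\cdot\vec\ek_1]$, using that $\vec d^{m+1}_\rho$ is $L^2$-orthogonal on each element to piecewise constants, and gaining a factor $h$ from $|\vec X^m\cdot\vec\ek_1-P^h[\vec X^m\cdot\vec\ek_1]|_{0,\infty}\le Ch$. These two devices exploit the identical orthogonality and are interchangeable; both then still require the time summation by parts to avoid the $h^{-1}$ inverse estimate on $D_t\vec E^{m+1}_\rho$, exactly as you say.

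One misattribution worth flagging: you claim the hypothesis $\partial I=\emptyset$ "enters exactly" at the node-vanishing of the interpolation error. That is not where it is used -- the Lagrange interpolant vanishes at \emph{all} nodes $q_0,\dots,q_J$ whether $I=\RZ$ or $I=(0,1)$, so the integration-by-parts boundary terms would vanish in the open-curve case too. The genus-1 restriction is needed because \eqref{eq:xrho} demands $\vec x\cdot\vec\ek_1>0$ on $\overline I$, which is incompatible with the axis condition \eqref{eq:axibc} when $\partial I\neq\emptyset$; without the resulting uniform lower bound $\vec x\cdot\vec\ek_1\ge c_1$ (and hence $\vec X^m\cdot\vec\ek_1\ge c_1/2$), the left-hand side of the energy inequality loses its coercivity constants $c_0^2c_1/8$ and $c_1/4$, and the whole Gronwall mechanism collapses near the axis. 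You do correctly note later that the lower bound makes the weight harmless, so this is a commentary slip rather than a gap in the argument, but it is the accurate reason the theorem is stated only for closed generating curves.
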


\begin{rem} 
At present we are not able to prove similar bounds for genus-0 surfaces. 
Note that in this case one has to deal in addition with the boundary 
conditions \eqref{eq:axibc} and \eqref{eq:bc}, which
state that the curve meets the $x_2$--axis at a right angle. 
An error analysis for curve shortening flow subject to normal contact to a 
given boundary has been carried out in \cite{DeckelnickE98},
based on the formulation \eqref{eq:csfdeTurck}. 
However, it is not straightforward to apply the corresponding analysis to our 
setting because of the presence of $\vec x \cdot \vec\ek_1$, which degenerates 
at the boundary. As we shall see in Section~\ref{sec:nr}, the method
nevertheless performs well in practice, also in the case of genus-0 surfaces.
\end{rem}

\setcounter{equation}{0}
\section{Error analysis} \label{sec:error}

In this section, we prove the main result of this paper, Theorem~\ref{thm:ebP}.
Hence we assume that $\partial I = \emptyset$, so that 
$I = \RZ$ and $\Vhpartialzero = \Vh$. To begin, note that  (\ref{eq:regularity}) and (\ref{eq:xrho}) imply that 
there exist constants $c_0,c_1,C_0 \in \bRplus$ such that the solution of \eqref{eq:DDstrong3} satisfies
\begin{equation} \label{eq:xvor}
\| \vec x(\cdot,t) \|_1 \leq C_0\ \mbox{ in } [0,T]\,, \quad 
c_0 \leq | \vec x_\rho | \leq C_0\ \mbox{ in } I\times [0,T]\,, \quad 
\vec x\cdot\vec\ek_1 \geq c_1\ \mbox{ in } I \times [0,T]\,.
\end{equation}
We claim that $\vec X^0,\ldots,\vec X^n$ solving $(\mathcal P^{h,\ttau})$
exist uniquely for every 
$n \in \{ 0,\ldots,M \}$ and satisfy
\begin{equation} \label{eq:indvor}
\| \vec X^m \|_1 \leq 2\, C_0\,, \quad 
\frac{c_0}2 \leq | \vec X^m_\rho | \leq 2\,C_0\ \mbox{ in } I\,, \quad 
\vec X^m\cdot\vec\ek_1 \geq \frac{c_1}2\ \mbox{ in } I\,, \; \;
m=0,\ldots,n \,,
\end{equation}
provided that $\ttau \leq \gamma\, \sqrt{h}$ for a suitably chosen $\gamma>0$. 
Clearly \eqref{eq:xvor} and \eqref{eq:estpih} imply that the 
assertion holds for $n=0$, provided that $0<h \leq h_0$ and $h_0$ is 
sufficiently small.
Now let us assume for an $n \in \{ 0,\ldots,M-1\}$ that 
$\vec X^m \in \Vh$, $0 \leq m \leq n$, 
solving \eqref{eq:DDlinear} exist and satisfy (\ref{eq:indvor}). 
Lemma~\ref{lem:ex} implies the existence of $\vec X^{n+1}$, and we shall derive
the corresponding bounds  (\ref{eq:indvor})  with the help of an error analysis. To do so, let
us decompose the error 
\begin{equation} \label{eq:errdcp}
\vec x^m - \Vec X^m= (\vec x^m - \pi^h \vec x^m) + (\pi^h \vec x^m - \vec X^m) =: \vec d^m +\vec E^m\,.
\end{equation}
We note that, for $k \in \{0,1\}$ and $p\in [2,\infty]$ we have 
from \eqref{eq:estpih}, \eqref{eq:regularity} and \eqref{eq:Dtf} that
\begin{subequations}
\begin{align}
| \vec d^m |_{k,p} & \leq C\, h^{2-k} \, | \vec x^m |_{2,p}
 \leq C\, h^{2-k}\,, \; \; m=0,\ldots,n\,, \label{eq:reg3} \\
\ttau \, \sum_{m=0}^{M-1} | D_t \vec d^{m+1} |_k^2 & 
\leq C\, h^{4-2k}\, \int_0^T | \vec x_t |_2^2 \dt \leq C\, h^{4-2k}\,.
\label{eq:reg4}
\end{align}
\end{subequations}
In addition, we infer from 
\citet[Theorem 4, Section 5.9.2]{Evans98} and (\ref{eq:regularity}) that
\[
\vec x_t \in C([0,T];[H^1(I)]^2)\,,
\]
and hence, on recalling (\ref{eq:estpih}),
\begin{equation} \label{eq:reg1}
\| D_t \vec x^{m+1} \|_1 + \| D_t \pi^h \vec x^{m+1} \|_1 
\leq C\, \| \vec x_t \|_{C([0,T];[H^1(I)]^2)} \leq C\,, 
\quad m=0,\ldots,M-1\,.
\end{equation}
Furthermore, \eqref{eq:indvor}, 
\eqref{eq:sobolev}, \eqref{eq:estpih} and \eqref{eq:regularity} imply 
that
\begin{equation} \label{eq:X1inf}
\| \vec X^m \|_{1,\infty} 
+ \| \vec E^m \|_{1,\infty} 
\leq C\,, \; \; m=0,\ldots,n\,.
\end{equation}
We begin by taking the difference of (\ref{eq:DD}) for $t=t_{m+1}$ and 
(\ref{eq:DDlinear}), in order to obtain the error relation
\begin{align*}
& \left((\vec X^m\cdot\vec\ek_1)\,
D_t \vec E^{m+1}, \vec\eta\,|\vec X^m_\rho|^2
\right) + \left( (\vec X^m\cdot\vec\ek_1)\,
\vec E^{m+1}_\rho,\vec\eta_\rho \right) \\ & \quad 
= \left[ \left( ( \vec X^m \cdot\vec\ek_1) \, D_t \pi^h \vec x^{m+1}, \vec\eta \, | \vec X^m_\rho |^2 \right) 
- \left( (\vec x^{m+1} \cdot\vec\ek_1)\, \vec x^{m+1}_t, \vec\eta \, | \vec x^{m+1}_\rho |^2 \right) \right]  \\ & \qquad
 + \left[ \left( \vec X^m \cdot\vec\ek_1) \, ( \pi^h \vec x^{m+1})_\rho, \vec \eta_\rho \right) 
- \left( ( \vec x^{m+1} \cdot\vec\ek_1 ) \, \vec x^{m+1}_\rho, \vec \eta_\rho \right) \right] 
 +  \left(  \vec\eta \cdot\vec\ek_1, \, |\vec X^m_\rho|^2 - | \vec x^{m+1}_\rho |^2 \right) \\ & \quad
=: \sum_{i=1}^3 T_i(\vec\eta) \qquad \forall\ \vec\eta\in \Vh\,.
\end{align*}
If we set $\vec\eta=\ttau\, D_t \vec E^{m+1}$ and sum over $m=0,\ldots,n$, 
we obtain
\begin{align} 
& 
\ttau \sum_{m=0}^n \left( \vec X^m\cdot\vec\ek_1\,
| D_t \vec E^{m+1} |^2, |\vec X^m_\rho|^2 \right) 
+ \sum_{m=0}^n \left( (\vec X^m\cdot\vec\ek_1)\,
\vec E^{m+1}_\rho,\vec E^{m+1}_\rho - \vec E^m_\rho \right) 
\nonumber \\ & \qquad
 = \ttau \sum_{m=0}^n \sum_{i=1}^3 T_i(D_t \vec E^{m+1})\,. \label{eq:err0}
\end{align}
Observing that 
\begin{equation} \label{eq:bba}
b\,(b-a) \geq \tfrac12\,( b^2 - a^2) \quad \forall\ a,b \in \bR\,,
\end{equation}
and using (\ref{eq:sbp}) with $\vec E^0=\vec 0$, we obtain that
\begin{align*}
& \sum_{m=0}^n \left( (\vec X^m\cdot\vec\ek_1)\,
\vec E^{m+1}_\rho,\vec E^{m+1}_\rho - \vec E^m_\rho \right) 
\geq \tfrac12 \sum_{m=0}^n \left( \vec X^m\cdot\vec\ek_1, | \vec E^{m+1}_\rho |^2 - | \vec E^m_\rho |^2 \right) \\ \nonumber & \qquad 
= \tfrac12 \left( \vec X^n\cdot\vec\ek_1, | \vec E^{n+1}_\rho |^2 \right) 
- \tfrac12\, \ttau \, 
\sum_{m=1}^n \left( D_t \vec X^m\cdot\vec\ek_1, \, | \vec E^m_\rho |^2 \right).
\end{align*}
Combining this with \eqref{eq:err0} yields, on recalling 
(\ref{eq:indvor}), that
\begin{align}
& \frac{c_0^2\,c_1}{8}\,\ttau \,\sum_{m=0}^n | D_t \vec E^{m+1} |_0^2 
+ \frac{c_1}{4}\, | \vec E^{n+1}_\rho |_0^2 \nonumber \\ & \qquad
 \leq \tfrac12\, \ttau \, \sum_{m=1}^n 
\left( D_t \vec X^m\cdot\vec\ek_1, | \vec E^m_\rho |^2 \right) 
+ \ttau \sum_{m=0}^n \sum_{i=1}^3 T_i(D_t \vec E^{m+1})\,. \label{eq:err1}
\end{align}
In order to treat the first term on the right hand side of (\ref{eq:err1}) we write $D_t \vec X^m = D_t \pi^h \vec x^m - D_t \vec E^m$.
Observing that $|\vec E^m_\rho|_{0,\infty} + | D_t \pi^h \vec x^m |_{0,\infty} 
\leq C$ in view of \eqref{eq:X1inf}, \eqref{eq:sobolev}, \eqref{eq:reg1},
we deduce that
\begin{align} 
\tfrac12\, \ttau \, \sum_{m=1}^n \left( D_t \vec X^m\cdot\vec\ek_1,  
| \vec E^m_\rho |^2 \right) 
& \leq C\,\ttau\,\sum_{m=1}^n \left( | D_t \pi^h \vec x^m |_{0,\infty}\, 
| \vec E^m_\rho|^2_0 + | \vec E^m_\rho |_{0,\infty}\, | D_t \vec E^m |_0 \,
| \vec E^m_\rho |_0 \right) \nonumber \\ 
& \leq \epsilon\,\ttau\,\sum_{m=0}^{n-1} | D_t \vec E^{m+1} |_0^2 
+ C_\epsilon\,\ttau \, \sum_{m=1}^n | \vec E^m_\rho |_0^2\,. \label{eq:err2}
\end{align}
In the above $\epsilon >0$ is a parameter that will later be chosen to
be sufficiently small, while $C_\epsilon$ is a positive constant depending
on $\epsilon$.
Next, let us write
\begin{align}
T_1(D_t \vec E^{m+1}) & = \left( ( ( \vec X^m - \vec x^{m+1}) \cdot\vec\ek_1 )\, D_t \pi^h \vec x^{m+1}, D_t \vec E^{m+1} \, | \vec X^m_\rho |^2 \right)
\nonumber \\ & \quad
+ \left( (\vec x^{m+1}\cdot\vec\ek_1) \, ( D_t \pi^h \vec x^{m+1} - \vec x_t^{m+1}), D_t \vec E^{m+1} \, | \vec X^m_\rho |^2 \right) 
\nonumber \\ & \quad
+ \left( (\vec x^{m+1}\cdot\vec\ek_1) \, \vec x_t^{m+1}, D_t \vec E^{m+1}
\left( | \vec X^m_\rho |^2 - | \vec x^{m+1}_\rho |^2 \right) \right) 
 =: \sum_{j=1}^3 T^j_1\,. \label{eq:T1_123}
\end{align}
Since $|\vec X^m_\rho |^2 \leq 4\, C_0^2$ in $I$, 
we obtain with the help of (\ref{eq:sobolev}) and (\ref{eq:reg1}) that
\begin{align}
\ttau\,\sum_{m=0}^n T^1_1 & \leq C \,\ttau\,
\sum_{m=0}^n |\vec x^{m+1} - \vec X^m|_0\,| D_t \pi^h \vec x^{m+1}|_{0,\infty}
\,| D_t \vec E^{m+1} |_0  \nonumber \\ & 
\leq C\,\ttau\,\sum_{m=0}^n |\vec x^{m+1} - \vec X^m|_0\,| D_t \vec E^{m+1}|_0
\nonumber \\ & 
\leq \epsilon\,\ttau\,\sum_{m=0}^n | D_t \vec E^{m+1} |_0^2 
+ C_\epsilon \, \ttau \, \sum_{m=0}^n | \vec x^{m+1} - \vec X^m |_0^2\,. 
\label{eq:t11a}
\end{align}
For the last term in the above relation we observe that
\begin{equation} \label{eq:Xmxm}
\vec x^{m+1} - \vec X^m = \vec E^m +  \vec d^m + \ttau \, D_t \vec x^{m+1}\,,
\end{equation}
so that we may infer from (\ref{eq:t11a}), together with (\ref{eq:reg1}) and 
(\ref{eq:reg3}), that
\begin{equation} \label{eq:t11} 
\ttau \,\sum_{m=0}^n T^1_1 \leq \epsilon\,\ttau\,\sum_{m=0}^n | D_t \vec E^{m+1} |_0^2 
+ C_{\epsilon}\,\ttau\, \sum_{m=1}^n | \vec E^m |_0^2 
+ C_\epsilon \left( h^4 + (\ttau)^2 \right) . 
\end{equation}
Let us next consider $T^2_1$. Using \eqref{eq:regularity} and again the fact 
that $|\vec X^m_\rho|^2 \leq 4\, C_0^2$ in $I$, we may estimate
\begin{align}
\ttau \, \sum_{m=0}^n T^2_1 &
\leq C\,\ttau\, \sum_{m=0}^n | D_t \pi^h \vec x^{m+1} - \vec x_t^{m+1} |_0\,
 | D_t \vec E^{m+1} |_0 \nonumber \\ & 
\leq \epsilon\,\ttau\, \sum_{m=0}^n |D_t \vec E^{m+1} |_0^2 
+ C_{\epsilon} \, \ttau \, 
\sum_{m=0}^n | D_t \pi^h \vec x^{m+1} - \vec x_t^{m+1} |_0^2 \,.
\label{eq:t21a}
\end{align}
In view of (\ref{eq:estpih}) we have
\begin{align} \label{eq:Dtest}
| D_t \pi^h \vec x^{m+1} - \vec x_t^{m+1}|_0 & 
= \left| \frac1\ttau\, \int_{t_m}^{t_{m+1}} \pi^h \vec x_t 
 - \vec x_t + \vec x_t - \vec x_t^{m+1} \dt \right|_0 
\nonumber \\ &
\leq \frac1\ttau \int_{t_m}^{t_{m+1}} | \pi^h \vec x_t - \vec x_t |_0 \dt 
+ \frac1\ttau \int_{t_m}^{t_{m+1}} | \vec x_t - \vec x^{m+1}_t|_0 \dt 
\nonumber \\ & 
\leq C\,\frac{h^2}{\sqrt\ttau} \left( \int_{t_m}^{t_{m+1}} | \vec x_t |_2^2 \dt \right)^{\frac12} + \sqrt\ttau \left( \int_{t_m}^{t_{m+1}} | \vec x_{tt} |_0^2 \dt \right)^{\frac12} ,
\end{align}
which combined with (\ref{eq:t21a}) and (\ref{eq:regularity}) implies that
\begin{equation} \label{eq:t21}
\ttau \, \sum_{m=0}^n T^2_1 \leq \epsilon\,\ttau\, \sum_{m=0}^n |D_t \vec E^{m+1} |_0^2 
+ C_{\epsilon} \left( h^4 + ( \ttau )^2 \right) .
\end{equation}
In order to treat $T^3_1$, recall \eqref{eq:T1_123}, we write,
on noting \eqref{eq:Xmxm},
\begin{align} 
& | \vec x^{m+1}_\rho |^2 - | \vec X^m_\rho |^2 =  
2 \,( \vec x^{m+1}_\rho- \vec X^m_\rho )\cdot\vec x^{m+1}_\rho 
- | \vec x^{m+1}_\rho - \vec X^m_\rho |^2 \nonumber \\ & \qquad 
= 2\,\vec E^m_\rho\cdot\vec x^{m+1}_\rho 
+ 2\,\vec d^m_\rho\cdot\vec x^{m+1}_\rho 
+ 2\,\ttau \, D_t \vec x^{m+1}_\rho\cdot\vec x^{m+1}_\rho 
- | \vec E^m_\rho + \vec d^m_\rho + \ttau \, D_t \vec x^{m+1}_\rho |^2 
\nonumber \\ & \qquad
=: 2\,\vec d^m_\rho\cdot\vec x^{m+1}_\rho + u^m \,,
\label{eq:xXnormdiff}
\end{align}
where, in view of (\ref{eq:regularity}),  (\ref{eq:reg1}), (\ref{eq:reg3})
and \eqref{eq:X1inf},
\begin{align}
| u^m |_0 & \leq C \left( | \vec E^m_\rho |_0 + 
\ttau \, | D_t \vec x^{m+1}_\rho |_0 + | \vec E^m_\rho |_{0,\infty}\, 
| \vec E^m_\rho |_0 + | \vec d^m_\rho |_{0,\infty}\,| \vec d^m_\rho |_0 
\right) \nonumber \\ & \quad 
+ C \, \ttau \, | \vec x^{m+1}_\rho - \vec x^m_\rho |_{0,\infty}\, 
| D_t \vec x^{m+1}_\rho|_0  \nonumber \\ & 
\leq C \left( | \vec E^m_\rho |_0  + h^2 +  \ttau \right) . \label{eq:um}
\end{align}
We obtain with the help of (\ref{eq:xXnormdiff}) and integration by parts that
\begin{align}
\ttau \,\sum_{m=0}^n T^3_1 & = -\ttau \,\sum_{m=0}^n 
\left( (\vec x^{m+1}\cdot\vec\ek_1) \, \vec x_t^{m+1}\cdot D_t \vec E^{m+1}, 
2\,\vec d^m_\rho\cdot\vec x^{m+1}_\rho + u^m  \right) 
\nonumber \\ & 
= 2\,\ttau\,\sum_{m=0}^n \left(  (\vec x^{m+1}\cdot\vec\ek_1) \, 
(\vec x_t^{m+1}\cdot D_t \vec E^{m+1}_\rho)\, \vec x^{m+1}_\rho, 
\vec d^m \right) \nonumber \\ & \quad
+ 2 \, \ttau \, \sum_{m=0}^n \left( \vec v^m, \vec d^m \right)
- \ttau \, \sum_{m=0}^n \left( (\vec x^{m+1}\cdot\vec\ek_1) \, 
\vec x_t^{m+1} \cdot D_t \vec E^{m+1}, u^m \right) ,  \label{eq:t31a}
\end{align}
where
\begin{align*}
\vec v^m & = (\vec x^{m+1}_\rho \cdot\vec\ek_1) \, (\vec x_t^{m+1}\cdot D_t \vec E^{m+1}) \, \vec x^{m+1}_\rho + (\vec x^{m+1}\cdot\vec\ek_1) \, 
(\vec x_{t,\rho}^{m+1}\cdot D_t \vec E^{m+1})\, \vec x^{m+1}_\rho \\ & \quad 
+ (\vec x^{m+1}\cdot\vec\ek_1) \, 
(\vec x_t^{m+1}\cdot D_t \vec E^{m+1})\, \vec x^{m+1}_{\rho \rho}\,.
\end{align*}
Using H\"older's inequality, (\ref{eq:sobolev}) and the fact that
$\| \vec x^{m+1} \|_{2,\infty} + \| \vec x_t^{m+1} \|_1 \leq C$, 
we have
\[
| \vec v^m |_{0,1} 
\leq C \, \| \vec x^{m+1}_t \|_1 \, | D_t \vec E^{m+1} |_0 
\leq C \, | D_t \vec E^{m+1} |_0\,.
\]
If we combine this bound with (\ref{eq:um}), and use 
\eqref{eq:regularity} and (\ref{eq:reg3}),
we infer that
\begin{align}
& 2 \, \ttau \, \sum_{m=0}^n ( \vec v^m, \vec d^m) 
- \ttau \, \sum_{m=0}^n \left( (\vec x^{m+1}\cdot\vec\ek_1) \, 
\vec x_t^{m+1} \cdot D_t \vec E^{m+1}, u^m  \right) \nonumber \\ & \qquad 
\leq C \, \ttau \, \sum_{m=0}^n | \vec v^m |_{0,1}\, | \vec d^m |_{0,\infty} 
+ C \, \ttau \, \sum_{m=0}^n | D_t \vec E^{m+1} |_0 \, | u^m |_0 
\nonumber \\ & \qquad
\leq C \,\ttau\,\sum_{m=0}^n \left( | \vec E^m_\rho |_0 + h^2 + \ttau \right) 
| D_t \vec E^{m+1} |_0 \nonumber \\ & \qquad 
\leq \epsilon \, \ttau \, \sum_{m=0}^n | D_t \vec E^{m+1} |_0^2 +
C_\epsilon \, \ttau \, \sum_{m=1}^n | \vec E^m_\rho |_0^2 
+ C_\epsilon \left( h^4 + (\Delta t)^2 \right). \label{eq:t31b}
\end{align}
It remains to consider 
the first term on the right hand side of \eqref{eq:t31a}. We
deduce from (\ref{eq:sbp}) with $\vec E^0 = \vec 0$, \eqref{eq:regularity},
\eqref{eq:dpr}, (\ref{eq:reg3}), (\ref{eq:reg4}), \eqref{eq:Dtf} 
and \eqref{eq:reg1} that
\begin{align*}
& \ttau\,\sum_{m=0}^n \left( (\vec x^{m+1}\cdot\vec\ek_1)\, 
(\vec x_t^{m+1}\cdot D_t \vec E^{m+1}_\rho)\, \vec x^{m+1}_\rho, 
\vec d^m \right) \nonumber \\ & \quad
= \left( (\vec x^{n+1}\cdot\vec\ek_1)\, 
(\vec x_t^{n+1}\cdot\vec E^{n+1}_\rho)\,\vec x^{n+1}_\rho, \vec d^n \right) 
-\ttau\, \sum_{m=1}^n \left( D_t [ (\vec x^{m+1}\cdot\vec\ek_1)\, 
(\vec x^{m+1}_\rho\cdot\vec d^m )\, \vec x^{m+1}_t], \vec E^m_\rho \right) 
\\ &\quad
\leq C\,| \vec E^{n+1}_\rho |_0 \, | \vec d^n |_0 
+ C\,\ttau\,\sum_{m=1}^n  
\left( | \vec d^m |_{0,\infty} \left[ | D_t \vec x^{m+1}_t |_0 + \| D_t \vec x^{m+1} \|_1 \right] 
+ | D_t \vec d^m |_0\, | \vec x^m_t |_{0,\infty} \right) 
| \vec E^m_\rho |_0 \\ & \quad
\leq \epsilon\,| \vec E^{n+1}_\rho |_0^2 
+\ttau\, \sum_{m=1}^n | \vec E^m_\rho |_0^2 + C_\epsilon\, h^4\,.
\end{align*}
This implies together with (\ref{eq:t31a}) and (\ref{eq:t31b}) that
\begin{equation}  \label{eq:t31}
\ttau \,\sum_{m=0}^n T^3_1 \leq  \epsilon\,| \vec E^{n+1}_\rho |_0^2 + \epsilon\,\ttau\, \sum_{m=0}^n | D_t \vec E^{m+1} |_0^2
+ C_\epsilon \, \ttau\, \sum_{m=1}^n | \vec E^m_\rho |_0^2 + C_{\epsilon} \left( h^4 + (\ttau)^2 \right) .
\end{equation}
Combining \eqref{eq:T1_123}, (\ref{eq:t11}), (\ref{eq:t21}) and (\ref{eq:t31}) we obtain that
\begin{equation} \label{eq:t1}
\ttau\,\sum_{m=0}^n T_1(D_t \vec E^{m+1}) 
\leq \epsilon \left(| \vec E^{n+1}_\rho |_0^2 
+ \ttau \, \sum_{m=0}^n | D_t \vec E^{m+1} |_0^2 \right) 
+ C_\epsilon \left( \ttau \, \sum_{m=1}^n \| \vec E^m \|_1^2 
+ h^4 + (\ttau)^2 \right) .
\end{equation}
Let us next investigate the terms involving 
$T_2(D_t \vec E^{m+1})$ in \eqref{eq:err1}.
Since $\int_{I_j} (f- \pi^h f)_\rho\,\eta_\rho \drho=0$ for all 
$\eta \in V^h$ and $j=1,\ldots,J$, and on recalling the definitions of 
$P^h$ and $\vec d^m$ from \eqref{eq:defph} and \eqref{eq:errdcp}, we may write
\begin{align} 
& T_2(D_t \vec E^{m+1})
= \left( (\vec X^m \cdot\vec\ek_1) \, ( \pi^h \vec x^{m+1})_\rho, 
D_t \vec E^{m+1}_\rho \right) 
- \left( ( \vec x^{m+1} \cdot\vec\ek_1 ) \, \vec x^{m+1}_\rho, 
D_t \vec E^{m+1}_\rho \right)  \nonumber \\ & \qquad
 =  \left( ( (\vec X^m - \vec x^{m+1})\cdot\vec\ek_1 )\, \vec x^{m+1}_\rho, D_t \vec E^{m+1}_\rho \right)  
- \left( ( \vec X^m\cdot\vec\ek_1 - P^h[\vec X^m\cdot\vec\ek_1] )\, \vec d^{m+1}_\rho, D_t \vec E^{m+1}_\rho \right) \nonumber \\ & \qquad
 = : T^1_2 + T^2_2\,. \label{eq:T2_12} 
\end{align}
Applying (\ref{eq:sbp}) with $\vec E^0 = \vec 0$, we obtain that
\begin{align} \label{eq:t12a} 
\ttau \, \sum_{m=0}^n T^1_2 &
= \left( ( (\vec X^n - \vec x^{n+1})\cdot\vec\ek_1 )\, \vec x^{n+1}_\rho, 
\vec E^{n+1}_\rho \right) 
- \ttau\,\sum_{m=1}^n \left( D_t \left[ ( 
(\vec X^m - \vec x^{m+1}) \cdot\vec\ek_1 ) \,
\vec x^{m+1}_\rho \right], \vec E^m_\rho \right) 
\nonumber  \\ & =: \widetilde T - \ttau\, \sum_{m=1}^n \widetilde T^1_2\,. 
\end{align}
Using the identity
\begin{equation*} 
\vec x^{n+1} - \vec X^n = 
\vec d^{n+1} + \vec E^{n+1} + \ttau\,D_t \pi^h \vec x^{n+1}
- \ttau\,D_t\vec E^{n+1}\,,
\end{equation*}
and on recalling \eqref{eq:regularity}, \eqref{eq:indvor}, \eqref{eq:sobolev},
\eqref{eq:reg3} and \eqref{eq:reg1} we may estimate
\begin{align*}
\widetilde T & \leq C\,| \vec x^{n+1} - \vec X^n |_0 \, | \vec E^{n+1}_\rho |_0  \nonumber \\
& \leq C \left( | \vec d^{n+1} |_0 + | \vec E^{n+1} |_0  
+ \ttau\, | D_t \pi^h \vec x^{n+1} |_0 + \ttau\,| D_t \vec E^{n+1}|_0 \right) 
| \vec E^{n+1}_\rho |_0 \nonumber \\ 
& \leq \epsilon \left( | \vec E^{n+1}_\rho |_0^2 
+ \ttau\, | D_t \vec E^{n+1} |_0^2 \right) 
+ C_\epsilon\,\ttau\, | \vec E^{n+1}_\rho |_0^2 
+ C_\epsilon\, | \vec E^{n+1} |_0^2 
+ C_\epsilon \left( h^4 + (\ttau)^2 \right) .
\end{align*}
Regarding the second term on the right hand side of \eqref{eq:t12a},
we obtain, on noting \eqref{eq:dpr}, \eqref{eq:regularity},  
\eqref{eq:Xmxm}, \eqref{eq:sobolev}, 
(\ref{eq:reg1}), (\ref{eq:reg3}), \eqref{eq:Dtf}  
and (\ref{eq:reg4}), that
\begin{align*} 
 - \ttau\, \sum_{m=1}^n \widetilde T^1_2 
& \leq C\, \ttau\,\sum_{m=1}^n \left[ |  \x^{m+1} - \vec X^m  |_{0,\infty}\, 
| D_t \vec x^{m+1}_\rho |_0 +  | D_t (\vec x^{m+1} - \vec X^m)|_0 \right] | \vec E^m_\rho |_0 
\nonumber \\ & 
\leq C \, \ttau\, \sum_{m=1}^n \left( \| \vec E^m \|_1 + | \vec d^m |_{0,\infty} + \ttau\,\| D_t \x^{m+1} \|_1 
\right) | D_t \vec x^{m+1}_\rho |_0\,| \vec E^m_\rho |_0 \\ & \quad 
+ C\,\ttau\, \sum_{m=1}^n \left(  
| D_t \vec E^m |_0 + | D_t \vec d^m |_0 
+ \ttau\, | D_t D_t \vec x^{m+1} |_0  \right) 
| \vec E^m_\rho |_0 \nonumber \\
& \leq \epsilon \, \ttau\,\sum_{m=0}^{n-1} | D_t \vec E^{m+1} |_0^2  
+ C_\epsilon\,\ttau\, \sum_{m=1}^n \| \vec E^m \|_1^2 
+ C_\epsilon \left( h^4 + (\ttau)^2 \right),
\end{align*}
where we used the fact that 
$\sum_{m=1}^n \ttau\,| D_t D_t \vec x^{m+1} |_0^2 \leq C\, 
\int_0^T | \vec x_{tt} |_0^2 \dt \leq C$. 
If we insert the above estimates into (\ref{eq:t12a}) we deduce that 
\begin{align}
\ttau \, \sum_{m=0}^n T^1_2 & 
\leq  \left( \epsilon + C_\epsilon\, \ttau \right)  | \vec E^{n+1}_\rho |_0^2 
+ \epsilon \,  \ttau\,\sum_{m=0}^n | D_t \vec E^{m+1} |_0^2 
+ C_\epsilon\,| \vec E^{n+1} |_0^2  \nonumber \\ & \quad
+ C_\epsilon\,\ttau\, \sum_{m=1}^n \| \vec E^m \|_1^2 
+ C_\epsilon \left( h^4 + (\ttau)^2 \right) . \label{eq:t12}
\end{align}
A further application of (\ref{eq:sbp}) yields together with 
\eqref{eq:dpr}, (\ref{eq:estph}) and 
(\ref{eq:reg3}), on recalling \eqref{eq:X1inf}, that
\begin{align} \label{eq:t22a}
& \ttau \, \sum_{m=0}^n T^2_2  
= - \left( ( \vec X^n\cdot\vec\ek_1
- P^h[\vec X^n\cdot\vec\ek_1] )\, \vec d^{n+1}_\rho, 
\vec E^{n+1}_\rho \right)  \nonumber \\ & \qquad \qquad \qquad
+\ttau\, \sum_{m=1}^n \left( 
D_t \left[ ( \vec X^m\cdot\vec\ek_1 - P^h[\vec X^m\cdot\vec\ek_1] 
)\, \vec d^{m+1}_\rho \right], \vec E^m_\rho \right) \nonumber \\ &
\leq C\, h\, | ( \vec X^n\cdot\vec\ek_1)_\rho |_{0,\infty}\,
 | \vec d^{n+1}_\rho |_0 \, | \vec E^{n+1}_\rho |_0 
+ C\,\ttau\,h\,\sum_{m=1}^n | ( \vec X^m\cdot\vec\ek_1)_\rho |_{0,\infty} 
\,| D_t \vec d^{m+1}_\rho |_0 \, | \vec E^m_\rho |_0 \nonumber \\ & \quad
+ C\,\ttau\,h\, \sum_{m=1}^n | D_t ( \vec X^m\cdot\vec\ek_1)_\rho |_0 \,
| \vec d^m_\rho |_{0,\infty} \, | \vec E^m_\rho |_0 \nonumber \\ &
\leq C\, h^2 \, | \vec E^{n+1}_\rho |_0 
+ C\,\ttau\, \sum_{m=1}^n | \vec E^m_\rho |_0^2 
+ C\,h^2\,\ttau\, \sum_{m=1}^n | D_t \vec d^{m+1} |_1^2 
+ C\,\ttau\,h^2\, \sum_{m=1}^n \| D_t \vec X^m \|_1 \,| \vec E^m_\rho |_0\,. 
\end{align}
We have that $\| D_t \vec X^m \|_1 \leq \| D_t \vec E^m \|_1 
+ \| D_t  \pi^h \vec x^m \|_1 \leq C\,(h^{-1}\, | D_t \vec E^m |_0 +1)$,
on recalling \eqref{eq:inverse} and (\ref{eq:reg1}).
Hence it follows from \eqref{eq:t22a} and (\ref{eq:reg4}) that
\begin{align}
\ttau \, \sum_{m=0}^n T^2_2 & 
\leq C\, h^2\, | \vec E^{n+1}_\rho |_0 
+ C\,\ttau\, \sum_{m=1}^n | \vec E^m_\rho |_0^2 
+ C\, h^4 
+ C\, h\,\ttau\, \sum_{m=1}^n | D_t \vec E^m |_0\, | \vec E^m_\rho |_0 
\nonumber \\ & 
\leq \epsilon \left( | \vec E^{n+1}_\rho |_0^2 + 
\ttau\,\sum_{m=0}^{n-1} | D_t \vec E^{m+1} |_0^2 \right) 
+ C_\epsilon\,\ttau\, \sum_{m=1}^n | \vec E^m_\rho |_0^2 
+ C_\epsilon\, h^4\,. \label{eq:t22}
\end{align}
If we combine (\ref{eq:t12}) and (\ref{eq:t22}) we obtain, on recalling
\eqref{eq:T2_12}, that 
\begin{align} \label{eq:t2} 
\ttau\,\sum_{m=0}^n T_2(D_t \vec E^{m+1}) & 
\leq \left( \epsilon + C_\epsilon\,\ttau \right)  | \vec E^{n+1}_\rho |_0^2 
+ \epsilon \, \ttau\,\sum_{m=0}^n | D_t \vec E^{m+1} |_0^2  
+ C_\epsilon\,| \vec E^{n+1} |_0^2 \nonumber \\ & \quad
+ C_\epsilon\,\ttau\,\sum_{m=1}^n \| \vec E^m \|_1^2  
+ C_\epsilon \left( h^4 + (\ttau)^2 \right) .
\end{align}
The term $T_3(D_t \vec E^{m+1})$ can be treated in a similar way as $T^3_1$, 
and we obtain
\begin{equation} \label{eq:t3} 
\ttau \, \sum_{m=0}^n T_3(D_t \vec E^{m+1}) \leq \epsilon \,  | \vec E^{n+1}_\rho |_0^2 + \epsilon \, 
\ttau\,\sum_{m=0}^n | D_t \vec E^{m+1} |_0^2 +  C_\epsilon\,\ttau\, \sum_{m=1}^n | \vec E^m_\rho |_0^2 
+ C_\epsilon \left( h^4 + (\ttau)^2 \right) . 
\end{equation}
Let us insert (\ref{eq:err2}), (\ref{eq:t1}), (\ref{eq:t2}) and (\ref{eq:t3}) 
into (\ref{eq:err1}). After first choosing $\epsilon>0$ and then $\ttau>0$ 
sufficiently small, we obtain
\begin{equation} \label{eq:err3}
\frac{c_0^2\, c_1}{16}\,\ttau\, \sum_{m=0}^n | D_t \vec E^{m+1} |_0^2 
+ \frac{c_1}{8}\, | \vec E^{n+1}_\rho |_0^2 
\leq C\,| \vec E^{n+1} |_0^2 + C\,\ttau\, \sum_{m=1}^n \| \vec E^m \|_1^2 
+ C \left( h^4 + (\ttau)^2 \right) . 
\end{equation}
Finally, recalling that $\vec E^0=\vec 0$ we may write
\[
| \vec E^{n+1} |_0^2 
= \sum_{m=0}^n \left( | \vec E^{m+1} |_0^2 - | \vec E^m |_0^2 \right) 
= \ttau \,\sum_{m=0}^n \left( D_t \vec E^{m+1}, \vec E^m+ \vec E^{m+1} \right),
\]
and hence
\[
2\, C \, | \vec E^{n+1} |_0^2 
\leq \epsilon\,\ttau\, \sum_{m=0}^n | D_t \vec E^{m+1} |_0^2 
+ C_\epsilon\,\ttau\, \sum_{m=1}^n | \vec E^m |_0^2 
+ C_\epsilon\,\ttau\, | \vec E^{n+1} |_0^2\,.
\]
Adding this bound to (\ref{eq:err3}) and choosing first $\epsilon>0$ 
and then $\ttau>0$ sufficiently small, we deduce that
\begin{equation} \label{eq:before4}
\ttau\,\sum_{m=0}^n | D_t \vec E^{m+1} |_0^2 + \| \vec E^{n+1} \|_1^2 
\leq C\,\ttau\, \sum_{m=1}^n \| \vec E^m \|_1^2 
+ C \left( h^4 + (\ttau)^2 \right).
\end{equation}
The same arguments as above show that (\ref{eq:before4}) also holds 
with $n$ replaced by $k$, for all $0 \leq k \leq n$. 
Hence the discrete Gronwall inequality implies that 
\begin{equation} \label{eq:err4}
\ttau\,\sum_{m=0}^n | D_t \vec E^{m+1} |_0^2 + \| \vec E^{n+1} \|_1^2  
 \leq C \left( h^4 + (\ttau)^2 \right).
\end{equation}
Let us use (\ref{eq:err4}) in order to show that (\ref{eq:indvor}) holds for 
$\vec X^{n+1}$. Clearly,
we have from \eqref{eq:inverse}, \eqref{eq:reg3},
\eqref{eq:err4} and $\ttau \leq \gamma\,\sqrt{h}$ that
\begin{align} \label{eq:err5}
| \vec X^{n+1}_\rho - \vec x^{n+1}_\rho |_{0,\infty} &
\leq | \vec E^{n+1}_\rho |_{0,\infty} + | \vec d^{n+1}_\rho |_{0,\infty} 
\nonumber \\ &
\leq C\,h^{-\frac12}\,| \vec E^{n+1}_\rho |_0 
+ C\, h 
\leq C \left( \ttau\, h^{-\frac12} + h\right) \leq C\,(\gamma + h) \,.
\end{align}
Combining \eqref{eq:err5} and \eqref{eq:xvor} yields 
that $\frac{c_0}2 \leq | \vec X^{n+1}_\rho | \leq 2\, C_0$ 
provided that $0 < h \leq h_0$ and $h_0, \gamma$ are small enough. 
The remaining bounds in (\ref{eq:indvor}) can be shown in a similar way,
thus completing the induction step, 
so that the discrete solution $(\vec X^{m})_{m=0,\ldots,M}$ exists 
and satisfies \eqref{eq:indvor} for $m=0,\ldots,M$. Furthermore,
the above error analysis yields that
\begin{equation} \label{eq:err6}
\ttau\,\sum_{m=0}^{M-1} | D_t \vec E^{m+1} |_0^2 + \max_{m=0,\ldots,M} \| \vec E^m \|_1^2  
 \leq C \left( h^4 + (\ttau)^2 \right).
\end{equation}
The bounds for $\max_{m=0,\ldots,M} |\vec x^m - \vec X^m |_0^2$ and 
$\max_{m=0,\ldots,M} |\vec x^m - \vec X^m |_1^2$ 
in \eqref{eq:ebP1} and \eqref{eq:ebP2} now follow from (\ref{eq:err6}), 
\eqref{eq:errdcp} and \eqref{eq:reg3}. 
Finally, (\ref{eq:err6}) together with \eqref{eq:errdcp} and (\ref{eq:Dtest}) 
implies that
\[
\ttau\, \sum_{m=0}^{M-1} | \vec x_t^{m+1} - D_t \vec X^{m+1} |_0^2 
\leq 2\, \ttau \, \sum_{m=0}^{M-1} 
\left( | \vec x_t^{m+1} - D_t \pi^h \vec x^{m+1} |_0^2 
+ | D_t \vec E^{m+1} |_0^2 \right) 
\leq C \left( h^4 + (\ttau)^2 \right),
\]
completing the proof of Theorem \ref{thm:ebP}.

\begin{rem} 
Note that \eqref{eq:err6} yields a superconvergence result for the 
$H^1$--seminorm of the error, in that 
\[
\max_{m=0,\ldots,M} | \pi^h \vec x^m - \vec X^m |_1 \leq C\, h^2,
\]
provided that $\ttau \leq C \, h^2$\,.
\end{rem}

\setcounter{equation}{0}
\section{An alternative formulation} \label{sec:alt}

In this section we briefly consider an alternative formulation
of axisymmetric mean curvature flow in the case $I=\RZ$. 
By way of motivation, let us briefly return to \eqref{eq:DD}. 
In order to derive an apriori bound for the solution, a natural idea, 
that can be mimicked at the discrete level, is to choose 
$\vec\eta = \vec x_t$ as a test function. This yields
\[
 \left( \vec x \cdot\vec\ek_1\,|\vec x_t|^2,|\vec x_\rho|^2\right)
+ \left( (\vec x\cdot\vec\ek_1)\,\vec x_\rho, (\vec x_t)_\rho \right) 
+ \left( \vec x_t\cdot\vec\ek_1,|\vec x_\rho|^2 \right)
= 0\,,
\]
and hence
\[
\left( \vec x \cdot\vec\ek_1\,|\vec x_t|^2,|\vec x_\rho|^2\right) + 
\tfrac12\,
\ddt \left( \vec x \cdot\vec\ek_1,|\vec x_\rho|^2 \right) =
- \tfrac12\left( \vec x_t \cdot\vec\ek_1,|\vec x_\rho|^2\right).
\]
However, it does not seem possible to control the term on the right hand side,
unless a lower bound of the form $\vec x\cdot\vec\ek_1 \geq c_1 > 0$ in 
$I \times [0,T]$ is available for the solution. 
It is therefore unlikely that we can prove an unconditional stability bound
for a fully discrete approximation of \eqref{eq:DD}.

However, the situation changes if we consider the equation that results from 
multiplying \eqref{eq:DDstrong3} by $\vec x\cdot\vec\ek_1$, i.e.\
\begin{equation} \label{eq:DDstrongmult}
(\vec x\cdot\vec\ek_1)^2 \,|\vec x_\rho|^2\,\vec x_t - (\vec x\cdot\vec\ek_1)
((\vec x\cdot\vec\ek_1)\,\vec x_{\rho})_\rho + (\vec x\cdot\vec\ek_1)
|\vec x_\rho|^2\,\vec\ek_1 = \vec 0\,.
\end{equation}
As mentioned in the introduction, changing the tangential component of 
$\vec x_t$ just amounts to a reparametrization of 
$\Gamma(t) = \vec x(\overline I,t)$. It does not affect the evolution of
$\Gamma(t)$. We utilize this fact by adding the tangential term
\[
- (\vec x\cdot\vec\ek_1)\,(\vec x_\rho\cdot\vec\ek_1)\,\vec x_\rho
\]
to (\ref{eq:DDstrongmult}), which allows us to write the second order term 
in derivative form, namely
\begin{equation} \label{eq:DD2strong}
(\vec x\cdot\vec\ek_1)^2\,|\vec x_\rho|^2\,\vec x_t - 
((\vec x\cdot\vec\ek_1)^2\,\vec x_{\rho})_\rho +
(\vec x\cdot\vec\ek_1)\,|\vec x_\rho|^2\,\vec\ek_1 = \vec 0\,.
\end{equation}

A weak formulation for (\ref{eq:DD2strong}) then is given by

\noindent
$(\mathcal Q)$
Let $\vec x(0) \in [H^1(I)]^2$. For $t \in (0,T]$
find $\vec x(t) \in [H^1(I)]^2$ such that
\begin{equation} \label{eq:DD2}
 \left( (\vec x \cdot\vec\ek_1)^2\,\vec x_t, \vec\eta\,|\vec x_\rho|^2
\right)
+ \left( (\vec x\cdot\vec\ek_1)^2 \,\vec x_\rho, \vec\eta_\rho \right)
+ \left( \vec x \cdot\vec\ek_1,\vec\eta\cdot\vec\ek_1\,|\vec x_\rho|^2 \right)
 = 0
\qquad \forall\ \vec\eta \in [H^1(I)]^2 \,,
\end{equation}
while our fully discrete approximation of (\ref{eq:DD2}) reads:

\noindent
$(\mathcal Q^{h,\ttau})$
Let $\vec X^0= \pi^h \vec x(0) \in \Vh$. For $m=0,\ldots,M-1$, 
find $\vec X^{m+1}\in \Vh$, such that
\begin{align}
& \left((\vec X^m\cdot\vec\ek_1)^2\,
D_t \vec X^{m+1}, \vec\eta\,|\vec X^m_\rho|^2
\right)
+ \left( (\vec X^m\cdot\vec\ek_1)^2\,
\vec X^{m+1}_\rho,\vec\eta_\rho \right)
+ \left( \vec X^{m+1}\cdot\vec\ek_1,
\vec\eta\cdot \vec\ek_1 \, |\vec X^{m+1}_\rho|^2 \right)
= 0
\nonumber \\ & \hspace{11cm}
\quad \forall\ \vec\eta \in \Vh\,.
\label{eq:DD2nonlinear}
\end{align}
We remark that while our scheme $(\mathcal P^{h,\ttau})$, recall
\eqref{eq:DDlinear}, requires the solution of two independent linear systems at
each time step, for the approximation $(\mathcal Q^{h,\ttau})$ one has to solve
a coupled nonlinear system at each time step. \\
If we now choose $\vec\eta = \vec x_t$ in (\ref{eq:DD2}), we obtain that
\[
\tfrac12\,
\ddt \left( (\vec x \cdot\vec\ek_1)^2,|\vec x_\rho|^2 \right)
 = \left( (\vec x\cdot\vec\ek_1)^2\,\vec x_\rho, (\vec x_t)_\rho \right)
+ \left( \vec x \cdot\vec\ek_1,\vec x_t \cdot\vec\ek_1\,|\vec x_\rho|^2\right)
= - \left( (\vec x \cdot\vec\ek_1)^2\,|\vec x_t|^2,|\vec x_\rho|^2\right)
\leq 0\,, 
\]
from which we immediately obtain an apriori estimate for the solution.
Mimicking the same testing procedure on the discrete level yields the
following unconditional stability result.
\begin{lem} 
Let $(\vec X^{m})_{m=0,\ldots,M}$ be a solution to 
$(\mathcal{Q}^{h,\ttau})$. Then it holds that
\begin{equation} \label{eq:stab}
\tfrac12 \left( (\vec X^{m+1}\cdot\vec\ek_1)^2, 
|\vec X^{m+1}_\rho|^2 \right)
+ \ttau 
\left( (\vec X^m\cdot\vec\ek_1)^2\, | D_t \vec X^{m+1} |^2, |\vec X^m_\rho|^2
\right) 
\leq \tfrac12 \left( (\vec X^m\cdot\vec\ek_1)^2,|\vec X^m_\rho|^2 \right) ,
\end{equation}
for $m=0,\ldots,M-1$.
In particular, for $n\in\{0,\ldots,M-1\}$ we have that
\begin{equation} \label{eq:stabstab}
\tfrac12 \left( (\vec X^{n+1}\cdot\vec\ek_1)^2, 
|\vec X^{n+1}_\rho|^2 \right)
+ \ttau\,\sum_{m=0}^n
\left( (\vec X^m\cdot\vec\ek_1)^2\, | D_t \vec X^{m+1} |^2, |\vec X^m_\rho|^2
\right) 
\leq \tfrac12 \left( (\vec X^0\cdot\vec\ek_1)^2,|\vec X^0_\rho|^2 \right) .
\end{equation}
\end{lem}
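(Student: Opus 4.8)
The plan is to mimic at the discrete level the continuous energy computation carried out just before the lemma: test $(\mathcal{Q}^{h,\ttau})$ with the discrete time derivative of the current solution. Concretely, I would fix $m \in \{0,\ldots,M-1\}$, note that $D_t \vec X^{m+1} = \ttau^{-1}(\vec X^{m+1}-\vec X^m) \in \Vh$ is an admissible test function, insert $\vec\eta = D_t \vec X^{m+1}$ into \eqref{eq:DD2nonlinear}, and then multiply the resulting identity by $\ttau$.

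The first term then becomes $\ttau\,\bigl((\vec X^m\cdot\vec\ek_1)^2\,|D_t \vec X^{m+1}|^2, |\vec X^m_\rho|^2\bigr)$, which is already in the desired form and is nonnegative. For the second term I would use $(D_t \vec X^{m+1})_\rho = D_t \vec X^{m+1}_\rho$ and apply the vector-valued version of the elementary inequality \eqref{eq:bba}, namely $\vec b\cdot(\vec b-\vec a) \geq \tfrac12(|\vec b|^2-|\vec a|^2)$, pointwise with the nonnegative weight $(\vec X^m\cdot\vec\ek_1)^2$, to bound it below by $\tfrac12\bigl((\vec X^m\cdot\vec\ek_1)^2, |\vec X^{m+1}_\rho|^2 - |\vec X^m_\rho|^2\bigr)$. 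For the third term I would use $(D_t\vec X^{m+1})\cdot\vec\ek_1 = D_t(\vec X^{m+1}\cdot\vec\ek_1)$, since $\vec\ek_1$ is constant, and apply \eqref{eq:bba} in scalar form, now with the nonnegative weight $|\vec X^{m+1}_\rho|^2$, to bound it below by $\tfrac12\bigl((\vec X^{m+1}\cdot\vec\ek_1)^2 - (\vec X^m\cdot\vec\ek_1)^2, |\vec X^{m+1}_\rho|^2\bigr)$. Adding the three lower bounds, the cross terms $\pm\tfrac12\bigl((\vec X^m\cdot\vec\ek_1)^2, |\vec X^{m+1}_\rho|^2\bigr)$ cancel exactly, and what remains is precisely \eqref{eq:stab}. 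Then \eqref{eq:stabstab} follows by summing \eqref{eq:stab} over $m = 0,\ldots,n$ and telescoping.

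I expect the only delicate point to be the bookkeeping in the last step: one must check that the ``mixed'' contribution $\tfrac12\bigl((\vec X^m\cdot\vec\ek_1)^2, |\vec X^{m+1}_\rho|^2\bigr)$ produced by the discrete convexity estimate on the stiffness term is exactly cancelled by the corresponding contribution from the lower-order term. This is where the specific semi-implicit discretization in \eqref{eq:DD2nonlinear} --- the explicit factor $(\vec X^m\cdot\vec\ek_1)^2$ in the stiffness term together with the implicit $\vec X^{m+1}\cdot\vec\ek_1$ and $|\vec X^{m+1}_\rho|^2$ in the last term --- is used in an essential way. Note that nothing in this argument requires a lower bound on $\vec X^m\cdot\vec\ek_1$ or on $|\vec X^m_\rho|$, nor any induction hypothesis such as \eqref{eq:indvor}; in particular the estimate holds unconditionally in $h$ and $\ttau$, which is exactly the advantage of $(\mathcal{Q}^{h,\ttau})$ over $(\mathcal P^{h,\ttau})$.
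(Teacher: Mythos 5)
Your proposal is correct and follows essentially the same argument as the paper: testing with $\vec\eta = \ttau\, D_t \vec X^{m+1}$ (equivalently, testing with $D_t \vec X^{m+1}$ and multiplying by $\ttau$), applying the elementary inequality \eqref{eq:bba} pointwise to both the stiffness term and the lower-order term, observing the exact cancellation of the mixed term $\tfrac12\bigl((\vec X^m\cdot\vec\ek_1)^2,|\vec X^{m+1}_\rho|^2\bigr)$, and then telescoping. Your closing remarks about the role of the particular semi-implicit weighting and the absence of any smallness or nondegeneracy hypotheses are also consistent with the paper's discussion.
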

\begin{proof}
Choosing $\vec\eta = \ttau\, D_t \vec X^{m+1}$ in (\ref{eq:DD2nonlinear})  and recalling (\ref{eq:bba})
yields that
\begin{align*} 
0 & = \ttau \left( (\vec X^m\cdot\vec\ek_1)^2\, | D_t \vec X^{m+1} |^2, |\vec X^m_\rho|^2
\right) + \left( (\vec X^m\cdot\vec\ek_1)^2\,
\vec X^{m+1}_\rho, (\vec X^{m+1} - \vec X^m)_\rho
\right) \nonumber \\
& \quad + \left( \vec X^{m+1}\cdot\vec\ek_1,
(\vec X^{m+1} - \vec X^m)\cdot\vec\ek_1 \, |\vec X^{m+1}_\rho|^2 \right)
\nonumber \\ 
& \geq \ttau \left( (\vec X^m\cdot\vec\ek_1)^2\, | D_t \vec X^{m+1} |^2, |\vec X^m_\rho|^2
\right) + \tfrac12 \left( (\vec X^m\cdot\vec\ek_1)^2,
|\vec X^{m+1}_\rho|^2 - |\vec X^m_\rho|^2 \right) \nonumber \\
& \quad
+ \tfrac12 \left( (\vec X^{m+1}\cdot\vec\ek_1)^2 - 
(\vec X^m\cdot\vec\ek_1)^2, 
|\vec X^{m+1}_\rho|^2 \right)
\nonumber \\ 
& = 
\ttau \left( (\vec X^m\cdot\vec\ek_1)^2\, | D_t \vec X^{m+1} |^2, |\vec X^m_\rho|^2
\right) + \tfrac12 \left( (\vec X^{m+1}\cdot\vec\ek_1)^2, 
|\vec X^{m+1}_\rho|^2 \right)
 - \tfrac12 \left( (\vec X^m\cdot\vec\ek_1)^2,|\vec X^m_\rho|^2 \right) .
\end{align*} 
This proves \eqref{eq:stab}. Summing for
$m=0,\ldots,n$ then yields the desired result \eqref{eq:stabstab}. 
\end{proof}

We conclude this section by remarking that with the help of a Brouwer fixed
point theorem, see e.g.\ \citet[Prop.~2.8]{Zeidler86}, it is possible to
prove the existence of a solution $\vec X^{m+1} \in \Vh$ to
\eqref{eq:DD2nonlinear}, provided that $\ttau$ is chosen
sufficiently small. The uniqueness of this solution can also be established.
Finally, using the techniques from Section~\ref{sec:error}, we can prove
that the solutions of $(\mathcal Q^{h,\ttau})$ satisfy error bounds
similar to the ones in Theorem~\ref{thm:ebP}.

\setcounter{equation}{0}
\section{Numerical results} \label{sec:nr}

\subsection{Genus-1 surfaces} \label{subsec:torus}

In order to perform a convergence experiment for an evolving torus, 
we compute a right hand side $\vec f_{\mathcal P}$ 
for \eqref{eq:DDstrong3} so that
\begin{equation} \label{eq:solx}
\vec x(\rho, t) = 
\begin{pmatrix}
g(t) + \cos(2\,\pi\,\rho) \\ \sin(2\,\pi\,\rho)
\end{pmatrix},
\quad \text{where } g(t) = 2 + \sin(\pi\,t)\,,
\end{equation}
is the solution. We then compare e.g.\ $\vec x^{m+1}$ to the discrete solution
$\vec X^{m+1}$ of \eqref{eq:DDlinear}, with the added right hand side
$(\pi^h\vec f^{m+1}_{\mathcal P}, \vec\eta)$. 
As the time step size we choose $\ttau = h^2$,
for $h = J^{-1} = 2^{-k}$, $k=5,\ldots,9$. The results in 
Table~\ref{tab:torusPquad} confirm the optimal convergence rate from
Theorem~\ref{thm:ebP}.
\begin{table}
\center
\begin{tabular}{|r|c|c|c|c|}
\hline
$J$ & $\displaystyle\max_{m=0,\ldots,M} |\vec x^m - \vec X^m|_0$ & EOC
& $\displaystyle\max_{m=0,\ldots,M} |\vec x^m - \vec X^m|_1$ & EOC 
\\ \hline
32  & 7.8742e-03 & ---  & 3.5678e-01 & ---  \\
64  & 1.9647e-03 & 2.00 & 1.7815e-01 & 1.00 \\
128 & 4.9092e-04 & 2.00 & 8.9045e-02 & 1.00 \\
256 & 1.2272e-04 & 2.00 & 4.4519e-02 & 1.00 \\
512 & 3.0678e-05 & 2.00 & 2.2259e-02 & 1.00 \\
\hline
\end{tabular}
\caption{Errors for the convergence test for (\ref{eq:solx})
over the time interval $[0,1]$ for the scheme $(\mathcal P^{h,\ttau})$
with the additional right hand side 
$(\pi^h\vec f^{m+1}_{\mathcal P}, \vec\eta)$.}
\label{tab:torusPquad}
\end{table}%
As a comparison, we provide the corresponding computation for the scheme
$(\mathcal Q^{h,\ttau})$ in Table~\ref{tab:torusQquad}, where the same 
optimal convergence rates can be observed.
\begin{table}
\center
\begin{tabular}{|r|c|c|c|c|}
\hline
$J$ & $\displaystyle\max_{m=0,\ldots,M} |\vec x^m - \vec X^m|_0$ & EOC
& $\displaystyle\max_{m=0,\ldots,M} |\vec x^m - \vec X^m|_1$ & EOC 
\\ \hline
32  & 8.9663e-03 & ---  & 3.5842e-01 & ---  \\
64  & 2.2421e-03 & 2.00 & 1.7836e-01 & 1.01 \\
128 & 5.6058e-04 & 2.00 & 8.9071e-02 & 1.00 \\
256 & 1.4015e-04 & 2.00 & 4.4522e-02 & 1.00 \\
512 & 3.5037e-05 & 2.00 & 2.2259e-02 & 1.00 \\
\hline
\end{tabular}
\caption{Errors for the convergence test for (\ref{eq:solx})
over the time interval $[0,1]$ for the scheme $(\mathcal Q^{h,\ttau})$
with the additional right hand side $(\pi^h\vec f^{m+1}_{\mathcal Q}, 
\vec\eta)$.}
\label{tab:torusQquad}
\end{table}%

Let us next consider the initial surface given by the torus
\begin{equation*} 
\mathcal{S}(0) = 
\left\{ \vec z \in \bR^3 : 
\left( 1- |\vec z - (\vec z \cdot \vec\ek_2)\,\vec \ek_2| \right)^2 
+ (\vec z \cdot \vec\ek_2)^2=r^2 \right\} ,
\quad \text{where $0<r<1$}\,,
\end{equation*}
and denote by $T_r$ the time at which the solution 
$\mathcal{S}(t)$ of (\ref{eq:mcfS}) becomes singular. 
It is known, see \citet[Proposition~3]{SonerS93}, that there exists a critical
value $r_0 \in (0,1)$, such that for $0<r<r_0$
the solution shrinks to a circle at time $T_r$, 
while for $r_0<r<1$ it closes up the hole at time $T_r$. 
Furthermore, for $r=r_0$ these effects occur at the same time.

We first demonstrate the two different behaviours by repeating the experiments
in \citet[Figs.\ 2, 3]{aximcf}, see also \citet[Figs.\ 5, 6]{gflows3d}.
In particular, in the first experiment we let $r=0.7$.
As expected, we obtain a surface that closes
up towards a genus-0 surface, see Figure~\ref{fig:torusR1r07}.
As the discretization parameters we choose $J=512$ and $\ttau = 10^{-4}$.
\begin{figure}
\center
\includegraphics[angle=-90,width=0.35\textwidth]{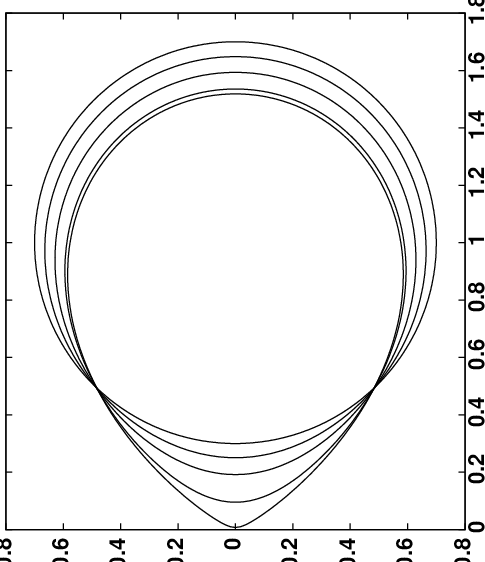}
\includegraphics[angle=-90,width=0.30\textwidth]{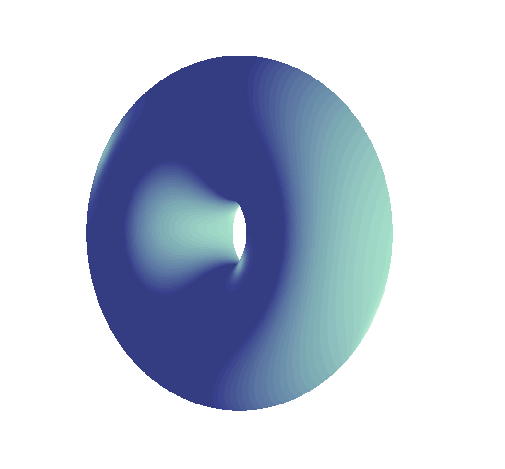}
\includegraphics[angle=-90,width=0.30\textwidth]{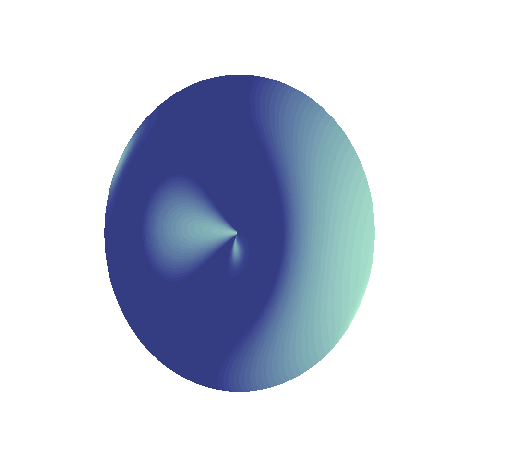}
\caption{Evolution for a torus with $r=0.7$. Plots are at times
$t=0,0.025,0.05,0.075,0.082$.
We also visualize the axisymmetric surface $\mathcal{S}^m$ generated by
$\Gamma^m$ at times $t=0$ and $t=0.082$, where $\Gamma^m = \vec X^m(I)$.}
\label{fig:torusR1r07}
\end{figure}%
For the second experiment we choose a torus with $r=0.5$ and obtain a shrinking 
evolution towards a circle. 
We show the evolution for the same discretization parameters 
in Figure~\ref{fig:torusR1r05}.
In addition, we show the evolution of the ratio
\begin{equation} \label{eq:ratio}
\ratio^m = \dfrac{\max_{j=1,\ldots,J} |\vec{X}^m(q_j) - \vec{X}^m(q_{j-1})|}
{\min_{j=1,\ldots,J} |\vec{X}^m(q_j) - \vec{X}^m(q_{j-1})|}
\end{equation}
over time. We can see that compared to the corresponding ratio plots in
\citet[Fig.\ 4]{aximcf}, the schemes $(\mathcal P^{h,\ttau})$ and
$(\mathcal Q^{h,\ttau})$ are performing
relatively well. In particular, even close to the singularity of the flow, 
when the surface shrinks to a circle and then vanishes, the
ratio appears to remain bounded.
\begin{figure}
\center
\includegraphics[angle=-90,width=0.35\textwidth]{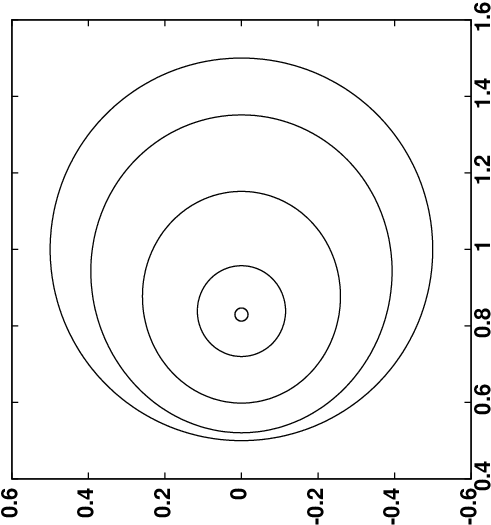}
\includegraphics[angle=-90,width=0.30\textwidth]{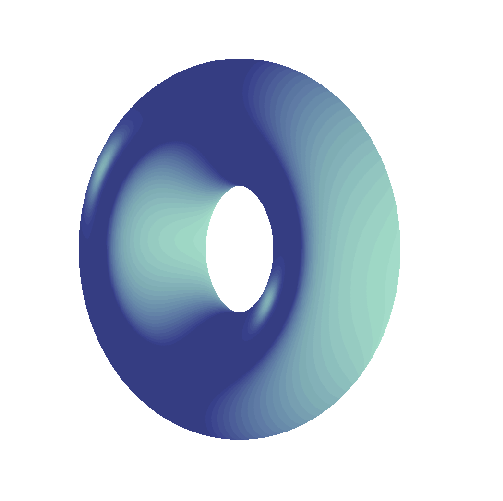}
\includegraphics[angle=-90,width=0.30\textwidth]{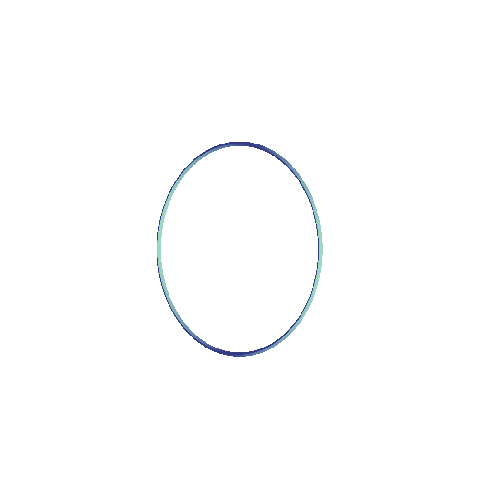}
\includegraphics[angle=-90,width=0.34\textwidth]{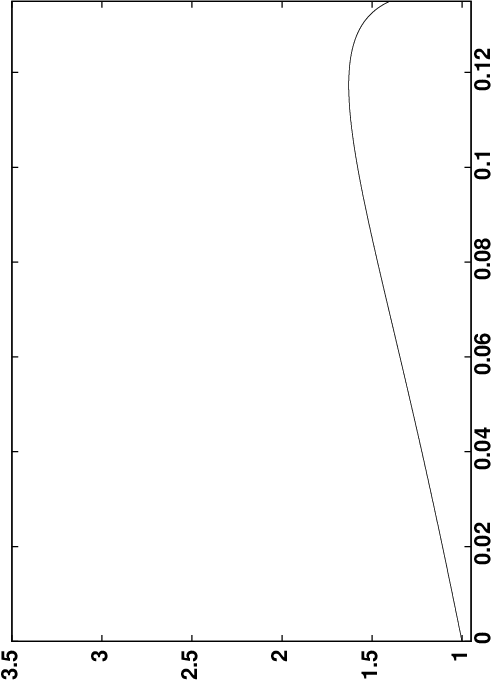}\quad
\includegraphics[angle=-90,width=0.34\textwidth]{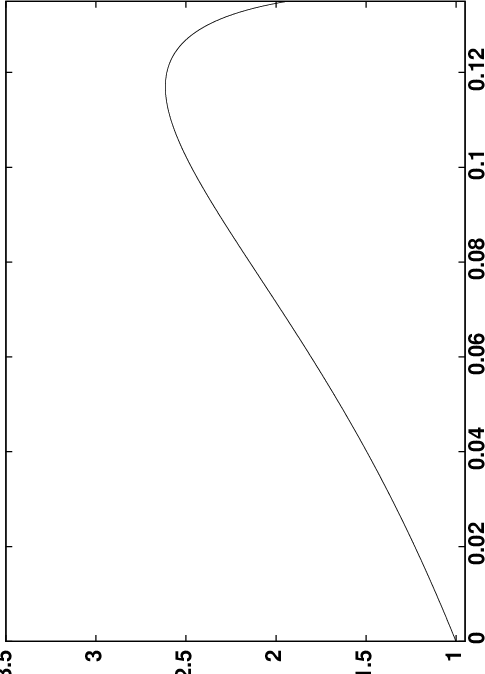}
\caption{Evolution for a torus with $r=0.5$. Plots are at times
$t=0,0.05,0.1,0.13,0.137$.
We visualize the axisymmetric surfaces generated by
$\Gamma^m$ at times $t=0$ and $t=0.137$.
Below we show plots of the ratio $\ratio^m$ over time for the schemes
$(\mathcal P^{h,\ttau})$, left, and $(\mathcal Q^{h,\ttau})$, right.
}
\label{fig:torusR1r05}
\end{figure}%

To the best of our knowledge, the precise value of the critical radius $r_0$ is
not yet known. However, we note that \citet[Thm.\ 2.1]{Ishimura93}, 
see also \cite{AharaI93}, gives a rigorous proof that 
$r_0 \geq \frac 2{3+\sqrt{5}} \approx 0.38$.
Numerical approaches to approximate $r_0$ have been presented in 
\cite{PaoliniV92} and \cite{Chopp94}, with the former giving an estimate
of $r_0 \approx 0.65$.
In what follows, we employ our scheme $(\mathcal P^{h,\ttau})$ in order to
obtain an accurate approximation of $r_0$ by repeating the 
above simulations for various values of $r$, with the help of a bisection 
method. In our experiments, and with the finer discretization parameters
of $J=2048$ and $\ttau=10^{-5}$, we observe that $r_0$
appears to lie in the interval $[0.64151,0.64152]$. We support this finding 
with the two simulations shown in Figure~\ref{fig:torus_r}.
We note that the stated interval
for the value of $r_0$ is confirmed when validating it with the finer
discretization parameters $J=4096$ and $\ttau=5\times10^{-6}$.
\begin{figure}
\center
\includegraphics[angle=-90,width=0.4\textwidth]{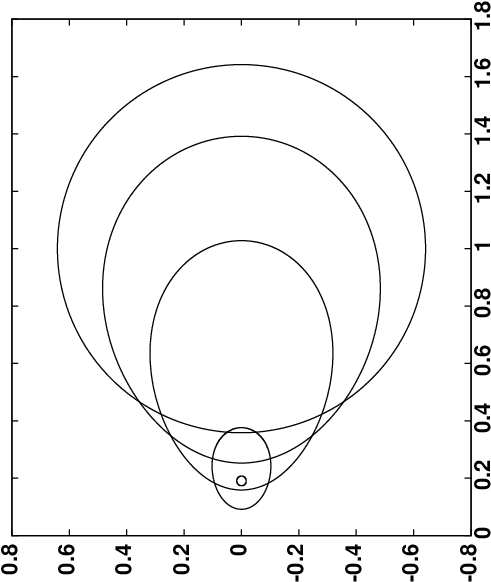}
\qquad
\includegraphics[angle=-90,width=0.4\textwidth]{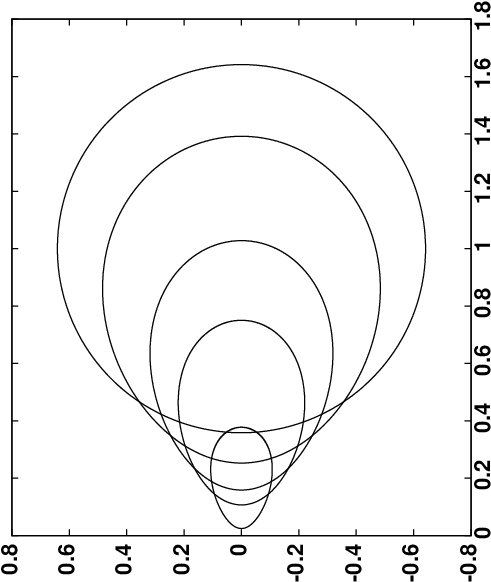}
\caption{
Evolution for a torus with $r=0.64151$ (left) and $r=0.64152$ (right). 
Plots are at times $t=0,0.1,0.2,0.25,0.29,0.298$ (left) and
$t=0,0.1,0.2,0.25,0.29$ (right).
}
\label{fig:torus_r}
\end{figure}%

Of particular interest in differential geometry are self-similarly shrinking
solutions to the mean curvature flow, \eqref{eq:mcfS}. 
It is clear by inspection, see also \cite{Angenent92}, that any self-similar
solution of \eqref{eq:mcfS} must be of the form
\begin{equation} \label{eq:selfsim}
\mathcal{S}(t) = \left[1-\frac t{\overline T_0}\right]^\frac12
\mathcal{S}(0)\,,
\end{equation}
where $\overline T_0>0$ is the time at which the solution to \eqref{eq:mcfS} 
becomes singular. 
The existence of a genus-1 solution of mean curvature flow of the form 
\eqref{eq:selfsim} was proved by \cite{Angenent92}.
The associated surface
$\mathcal{S}(0)$, in the case $\overline T_0 = 1$, is now frequently
called the Angenent torus, see e.g.\ \cite{Mantegazza11}.
The Angenent torus is axisymmetric, and one of its important properties 
is, that it is a critical point of 
Huisken's $F$-functional, see \cite{Huisken90},
defined for a hypersurface $\mathcal{S} \subset \bR^3$ as
\begin{equation} \label{eq:HuiskenF}
F_{\mathcal{S}}(\mathcal{S}) 
= \frac1{4\,\pi} \,\int_{\mathcal{S}} e^{-\frac14\,|\vec\id|^2} \dH{2}\,,
\end{equation}
where $\vec\id$ denotes the identity function, and
$\mathcal{H}^2$ is the two-dimensional Hausdorff measure in $\bR^3$.

In what follows, we would like to apply the techniques introduced in this paper
in order to compute approximations of the Angenent torus. To this end, we first
of all note that in the axisymmetric setting, the self-similar solution
\eqref{eq:selfsim} 
is generated by an evolving curve parameterized by 
$\vec y(t) : I \to \bRgeq \times \bR$ such that
\begin{equation} \label{eq:yselfsim}
\vec y(t) = \left[1-\frac{t}{\overline{T}_0}\right]^\frac12\,\vec y(0)\,.
\end{equation}
Clearly, \eqref{eq:yselfsim} is a solution of \eqref{eq:DDstrong3} 
if and only if $\vec y(0)$ satisfies the elliptic equation
\begin{equation} \label{eq:Klaus}
\frac1{2\,\overline{T}_0}\,\vec y \cdot \vec\ek_1\, |\vec y_\rho|^2\,\vec y + 
((\vec y \cdot \vec\ek_1)\, \vec y_\rho)_\rho 
- | \vec y_\rho |^2\,\vec\ek_1 = \vec 0 \,.
\end{equation} 
The natural finite element approximation of \eqref{eq:Klaus}, 
similarly to \eqref{eq:DDlinear}, is
given by: Find $\vec Y^h \in \Vh$ such that 
\begin{equation} \label{eq:DDY}
\mathcal{F}^h_{\overline{T}_0}(\vec Y^h) = \vec 0 \in \Vh\,, 
\end{equation}
where for $\alpha \in \bRplus$ and $\vec\chi \in \Vh$
we define $\mathcal{F}^h_\alpha(\vec\chi) \in \Vh$ via
\begin{equation} \label{eq:Fh}
\left(\mathcal{F}^h_\alpha(\vec \chi), \vec\eta \right)
= \frac1{2\,\alpha}
 \left((\vec \chi\cdot\vec\ek_1)\, \vec \chi , \vec\eta\,|\vec \chi_\rho|^2
\right) - \left( (\vec \chi\cdot\vec\ek_1)\,
\vec \chi_\rho,\vec\eta_\rho \right)
- \left( \vec\eta \cdot\vec\ek_1, |\vec \chi_\rho|^2\right)
\quad \forall\ \vec\eta \in \Vh\,.
\end{equation}
In practice, a solution to \eqref{eq:DDY} can be found with the help of a
damped Newton iteration, provided that a suitable initial guess is used. In
all our experiments for $\overline{T}_0=1$, the iteration converges in
fewer than 10 steps when 
the initial value for $\vec Y^h$ parameterizes a circle of radius $0.6$ 
centred around $2\,\vec\ek_1$. 
In Figure~\ref{fig:Angenent} we display some of the computed
approximations to the Angenent torus for different values of $J$.
\begin{figure}
\center
\mbox{
\includegraphics[angle=-90,width=0.3\textwidth]{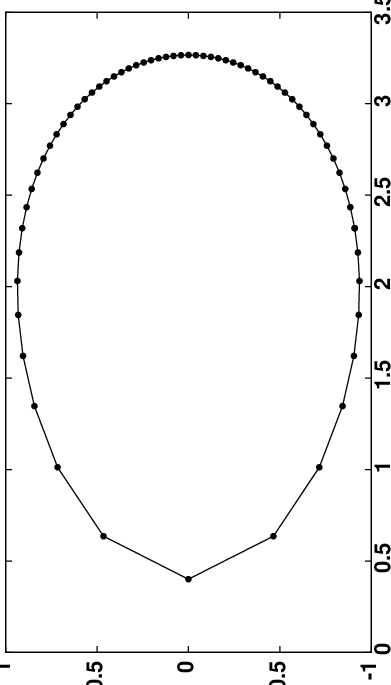}
\includegraphics[angle=-90,width=0.3\textwidth]{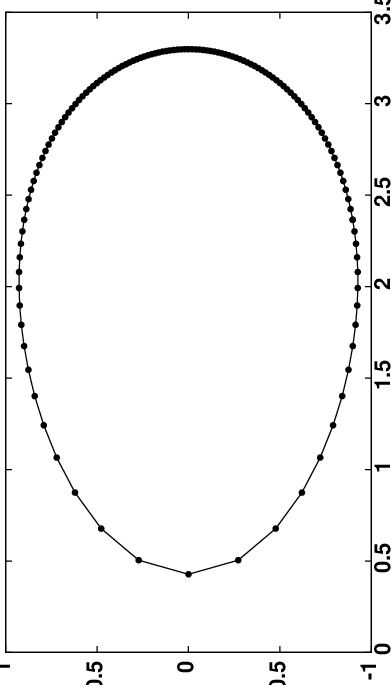}
\includegraphics[angle=-90,width=0.3\textwidth]{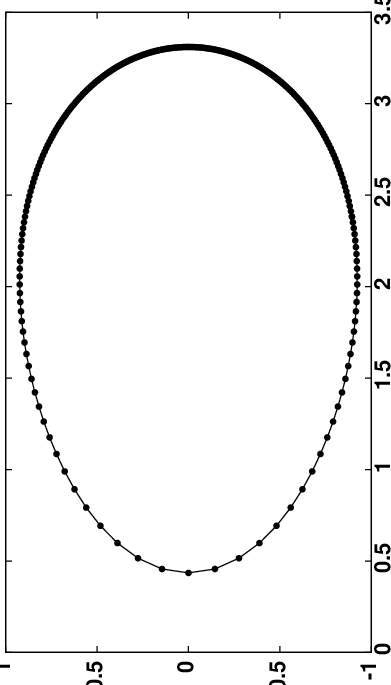}
}
\includegraphics[angle=-90,width=0.6\textwidth]{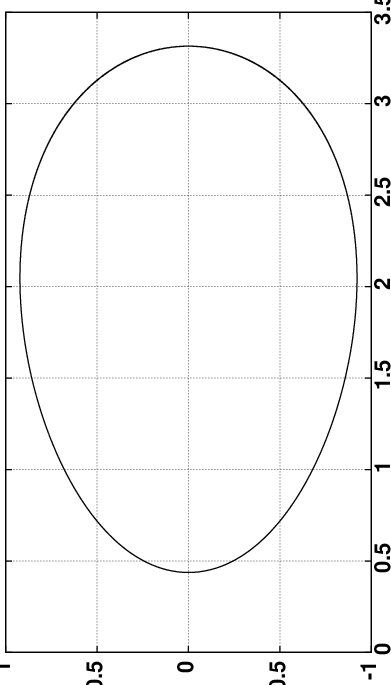}
\includegraphics[angle=-90,width=0.35\textwidth]{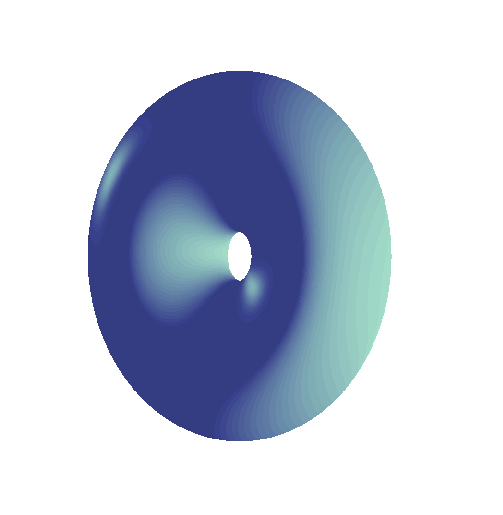}
\caption{
The solution $\vec Y^h$ to \eqref{eq:DDY} for $J=64,128,256$, above, and
for $J=8192$, below. We also visualize the axisymmetric surface generated by
$\vec Y^h(I)$, for $J=8192$.
}
\label{fig:Angenent}
\end{figure}%
In addition, in Figure~\ref{fig:selfsimsol} we present the evolution under 
mean curvature flow for the Angenent torus, 
by employing our scheme $(\mathcal P^{h,\ttau})$
with $J=4096$ and $\ttau=10^{-5}$. The presented plot 
nicely demonstrates the self-similar nature of the evolution, as well as the
fact that the extinction time for the Angenent torus is $\overline T_0=1$.
\begin{figure}
\center
\includegraphics[angle=-90,width=0.5\textwidth]{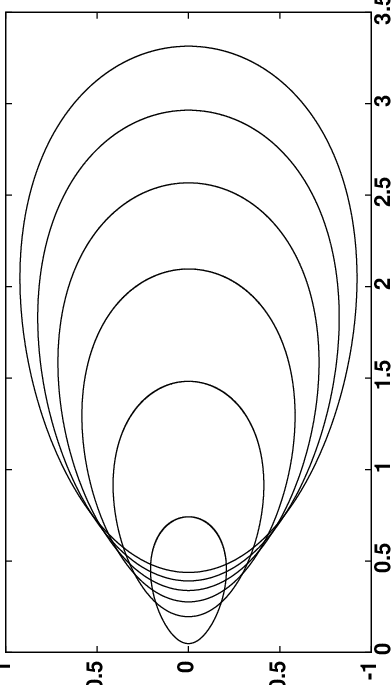}
\caption{
Evolution for the Angenent torus. Plots are at times $t=0,0.2,\ldots,0.8,0.95$.
}
\label{fig:selfsimsol}
\end{figure}%

For completeness we also compute highly accurate numerical approximations to
Huisken's $F$-functional 
\eqref{eq:HuiskenF}, as well as to the volume and the surface area of
the Angenent torus, with the help of our obtained solutions $\vec Y^h$.
To this end, let
\begin{align*} 
F(\vec Y^h) &
= \tfrac12 \left( \vec Y^h\cdot\vec\ek_1\,e^{-\frac14\,|\vec Y^h|^2}, 
|\vec Y^h_\rho| \right) ,\\ 
V(\vec Y^h) & = \pi \left( (\vec Y^h\,.\,\vec\ek_1)^2, 
[\vec Y^h_\rho]^\perp\,.\,\vec\ek_1\right) ,\quad
A(\vec Y^h) = 2\,\pi\left(\vec Y^h\,.\,\vec\ek_1 ,
|\vec Y^h_\rho|\right) ,
\end{align*}
where $(\cdot)^\perp$ denotes a clockwise rotation by $\frac{\pi}{2}$,
and where we have used the convention that 
$|\vec Y^h_\rho|^{-1}\,(\vec Y^h_\rho)^\perp$ 
denotes the outer normal to the curve
$\vec Y^h(I)$, see e.g.\ \cite{axisd} for details.
In Table~\ref{tab:Angenent} we display these numerical approximations, 
together with additional characteristic properties, 
for different values of $J$.
We remark that \eqref{eq:HuiskenF} for the Angenent torus has been
approximately computed in \cite{Berchenko-Kogan19}, with a value of
$1.85122$, which agrees well with our values for $F(\vec Y^h)$ reported in
Table~\ref{tab:Angenent}.
\begin{table}
\center
\begin{tabular}{|r|c|c|c|c|c|c|}
\hline
$\log_2 J$ & $F(\vec Y^h)$ & $V(\vec Y^h)$ & $A(\vec Y^h)$ & 
$\min_I \vec Y^h\cdot\vec\ek_1$ & $\max_I \vec Y^h\cdot\vec\ek_1$ & 
$\max_I \vec Y^h\cdot\vec\ek_2$ \\ \hline
16&1.8512166818&50.01714212&89.94051108&0.43712393&3.31470820&0.92171402\\
17&1.8512166742&50.01714302&89.94051299&0.43712396&3.31470825&0.92171401\\
18&1.8512166723&50.01714324&89.94051347&0.43712396&3.31470826&0.92171400\\
19&1.8512166718&50.01714329&89.94051359&0.43712397&3.31470827&0.92171400\\
20&1.8512166717&50.01714331&89.94051362&0.43712397&3.31470827&0.92171400\\
\hline
\end{tabular}
\caption{
Approximate values for Huisken's $F$-function, the enclosed volume, the surface
area and the dimensions of the Angenent torus, computed with the help of the
solutions $\vec Y^h$ to \eqref{eq:DDY}. 
}
\label{tab:Angenent}
\end{table}%

Finally, we are interested whether in the evolutions for the critical radius 
$r_0$ in
Figure~\ref{fig:torus_r}, the observed shapes become similar to scaled versions
of the Angenent torus. To this end, we develop the following criterion for
self-similarity of a given finite element approximation $\vec Z^h\in\Vh$. 
On recalling \eqref{eq:Fh}, 
we define 
the scale-invariant goodness of a self-similarity fit as
\begin{equation} \label{eq:alpha}
G(\vec Z^h) = \min_{\alpha\in\bRplus}
\frac{\left|\mathcal{F}^h_\alpha(\vec Z^h)\right|_0}
{\left( 1, |\vec Z^h_\rho|^2\right)}\,,
\end{equation}
where we observe that finding the minimizing value of $\alpha\in\bRplus$ in 
\eqref{eq:alpha} is trivial, since 
$\left|\mathcal{F}^h_\alpha(\vec Z^h)\right|_0^2$ is a quadratic function in
$\alpha^{-1}$.
In Figure~\ref{fig:torus_rG} we show the plots over time of the quantity $G$
for the two evolutions for the critical radius in Figure~\ref{fig:torus_r}. 
As expected, for both evolutions the value of $G$ is initially decreasing,
before it increases sharply as the extinction time is approached.
Throughout the evolutions it holds that $G(\vec X^m) > 0.17$.
In comparison, the four curves in Figure~\ref{fig:Angenent} all satisfy
$G(\vec Y^h) < 10^{-10}$.
Hence, overall, we conclude that the evolutions for the critical radius $r_0$ 
do become more similar to a scaled Angenent torus over time, without being
able to reach it exactly.
\begin{figure}
\center
\includegraphics[angle=-90,width=0.4\textwidth]{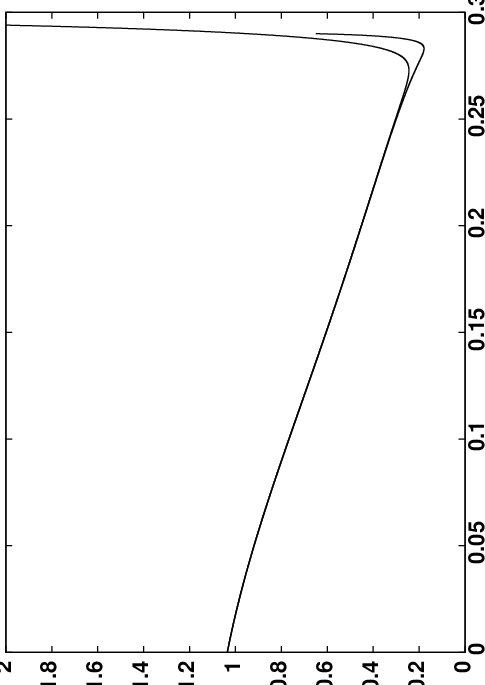} \qquad
\includegraphics[angle=-90,width=0.4\textwidth]{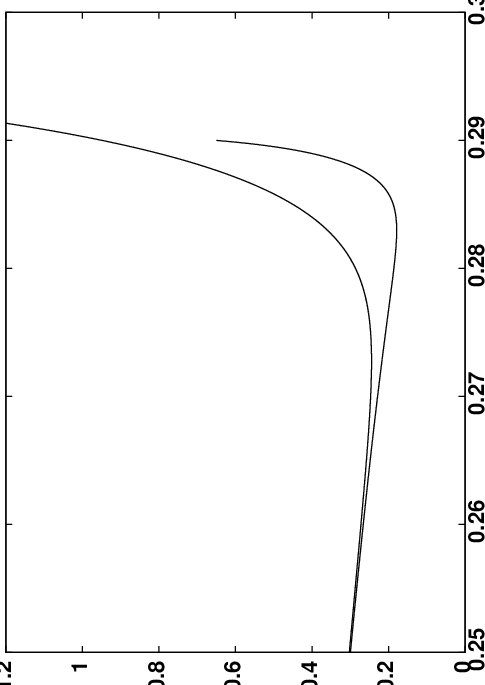}
\caption{A plot of $G(\vec X^m)$ over time for the two evolutions in
Figure~\ref{fig:torus_r}. On the right a plot over the time interval
$[0.25,0.3]$.
}
\label{fig:torus_rG}
\end{figure}%

Let us finish this section with an example for mean curvature flow of a genus-1 surface that is
generated from the initial data $\vec X^0$ parameterizing a
closed spiral. 
As can be seen from Figure~\ref{fig:spiral}, the spiral slowly untangles,
until the surface approaches a shrinking torus, that will once again shrink to
a circle.
For this experiment we use the discretization parameters 
$J=1024$ and $\ttau = 10^{-6}$, for $T=0.029$. 
\begin{figure}
\center
\newcommand\localwidth{0.24\textwidth}
\includegraphics[angle=-90,width=\localwidth]{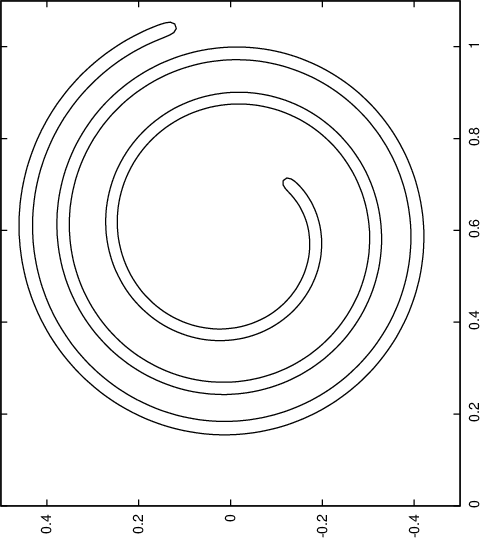}
\includegraphics[angle=-90,width=\localwidth]{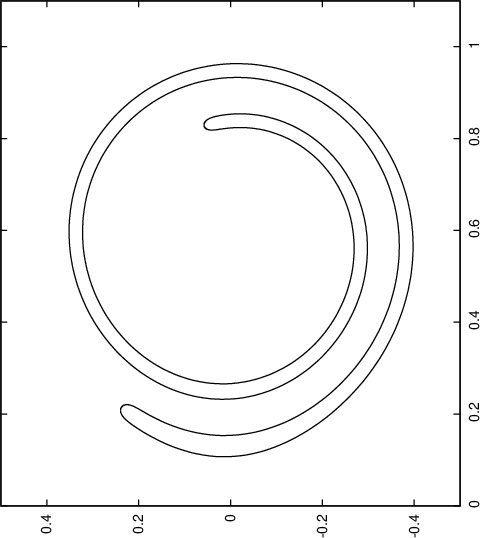}
\includegraphics[angle=-90,width=\localwidth]{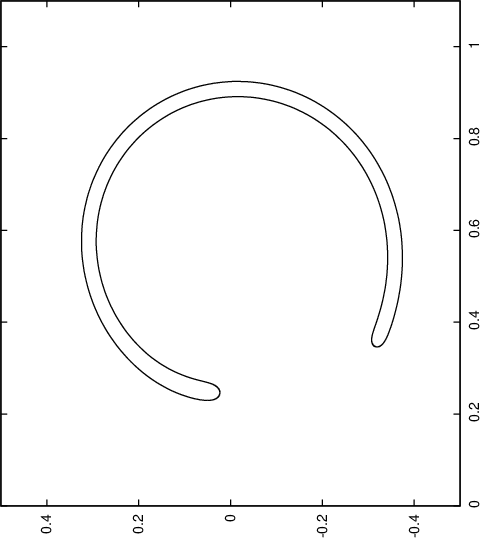}
\includegraphics[angle=-90,width=\localwidth]{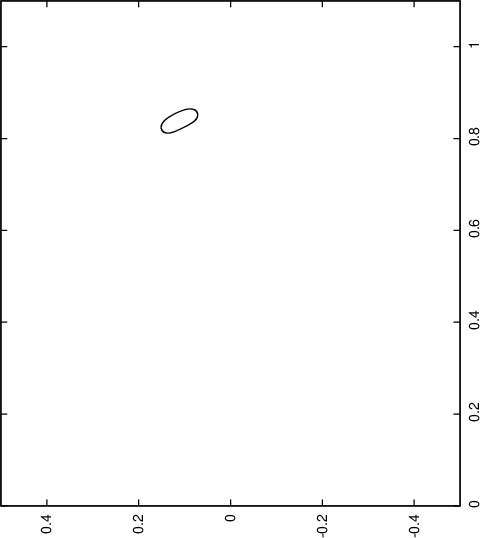}
\includegraphics[angle=-90,width=\localwidth]{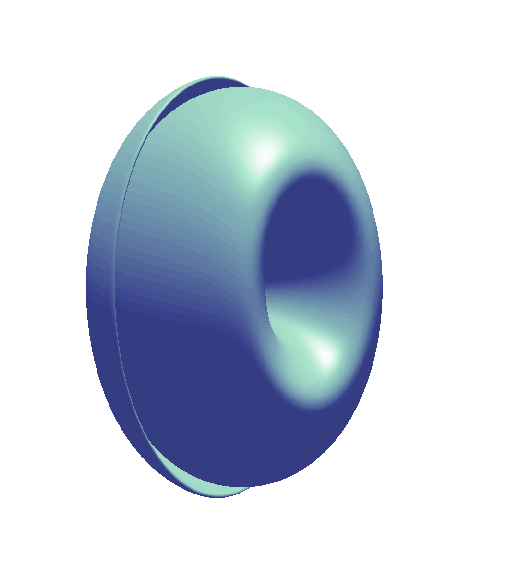}
\includegraphics[angle=-90,width=\localwidth]{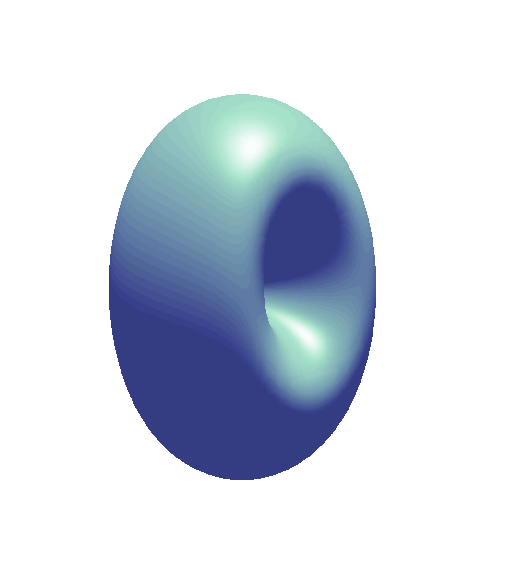}
\includegraphics[angle=-90,width=\localwidth]{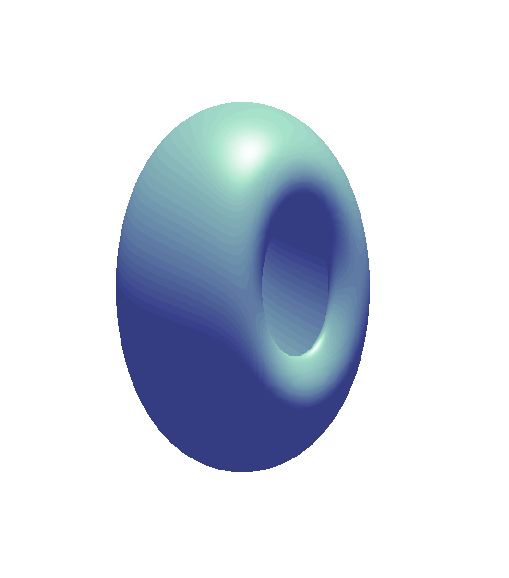}
\includegraphics[angle=-90,width=\localwidth]{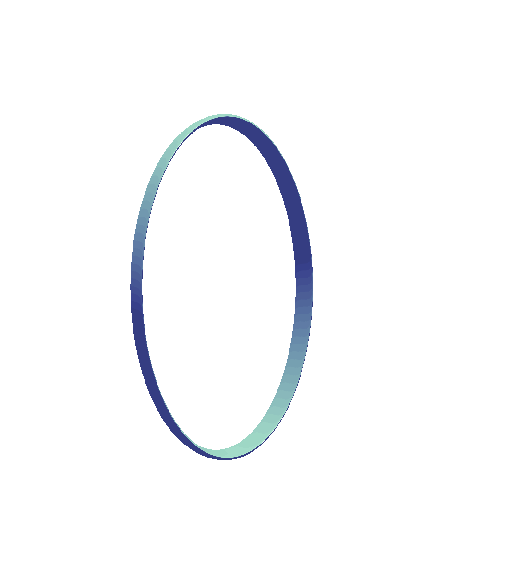}
\caption{Evolution for a genus-1 surface generated by a spiral.
Plots are at times $t=0,0.01,0.02,0.029$.
Below we visualize the axisymmetric surfaces generated by the curves.
}
\label{fig:spiral}
\end{figure}%

\subsection{Genus-0 surfaces}

We recall that the error bounds in Theorem~\ref{thm:ebP}
are only shown for the case of a closed curve. 
Nevertheless, in this section we want to consider
surfaces that are topologically equivalent to a sphere, and so $\vec x$
parameterizes an open curve with endpoints on the $x_2$--axis.

It is easy to show that a shrinking sphere with 
radius $[1 - 4\,t]^\frac12$, which corresponds to $\overline T_0 = \frac14$
and $\mathcal{S}(0)$ being the unit sphere in \eqref{eq:selfsim},
is a solution to (\ref{eq:mcfS}). 
In fact, the parameterization 
\begin{equation} \label{eq:solxpi}
\vec x(\rho, t) = 
[1 - 4\,t]^\frac12\, \begin{pmatrix}
\sin (\pi\,\rho) \\ \cos (\pi\,\rho)
\end{pmatrix}
\end{equation}
solves \eqref{eq:DDstrong3}. Hence we can compare e.g.\ $\vec x^{m+1}$ 
to the discrete solution $\vec X^{m+1}$ of \eqref{eq:DDlinear} and perform a
convergence experiment. As before, we choose $\ttau = h^2$,
for $h = J^{-1} = 2^{-k}$, $k=5,\ldots,9$. 
The results in Table~\ref{tab:spherePquad} indicate that despite the open 
curve case not being covered in Theorem~\ref{thm:ebP}, 
we still seem to observe the optimal convergence rates in practice.
\begin{table}
\center
\begin{tabular}{|r|c|c|c|c|c|}
\hline
$J$ & $\displaystyle\max_{m=0,\ldots,M} |\vec x^m - \vec X^m|_0$ & EOC &
$\displaystyle\max_{m=0,\ldots,M} |\vec x^m - \vec X^m|_1$ & EOC
 \\ \hline
32  & 8.0301e-04 & ---  & 8.9023e-02 & ---  \\
64  & 2.0079e-04 & 2.00 & 4.4572e-02 & 1.00 \\
128 & 5.0199e-05 & 2.00 & 2.2285e-02 & 1.00 \\
256 & 1.2550e-05 & 2.00 & 1.1139e-02 & 1.00 \\
512 & 3.1375e-06 & 2.00 & 5.5674e-03 & 1.00 \\
\hline
\end{tabular}
\caption{Errors for the convergence test for (\ref{eq:solxpi})
over the time interval $[0,0.125]$ for the scheme 
$(\mathcal P^{h,\ttau})$.}
\label{tab:spherePquad}
\end{table}%

We remark that the main reason we restricted our attention in
Section~\ref{sec:alt} to the case of closed curves, is that it is not clear
whether that alternative formulation is well-posed at the boundary. 
Let us give a formal justification for this observation: a smooth function 
satisfying (\ref{eq:axibc}), (\ref{eq:bc}) will have the property that 
$\vec x_t \cdot \vec\ek_1$ is small and 
$\frac{\vec x_{\rho}}{| \vec x_{\rho} |}$ behaves like $\pm \vec \ek_1$ close 
to $\partial I$. Using this information in (\ref{eq:DD2strong}), one sees that
\begin{equation} \label{eq:nonunif}
\frac{\vec x_{\rho \rho} \cdot \vec x_{\rho}}{| \vec x_{\rho}|^3} 
+ \frac{1}{\vec x \cdot \vec\ek_1} \approx 0 
\quad \mbox{ close to } \partial I\,,
\end{equation}
implying that $( \frac{1}{| \vec x_{\rho} |} )_\rho$ becomes large 
close to $\partial I$. This suggests that the approach using 
\eqref{eq:DD2strong} is not appropriate for describing the evolution of 
genus-0 surfaces. For the formulation \eqref{eq:newsystem}, on the other hand,
we would obtain
\begin{equation} \label{eq:unif}
\frac{\vec x_{\rho \rho} \cdot \vec x_{\rho}}{| \vec x_{\rho}|^3} 
= - \left( \frac{1}{| \vec x_{\rho} |} \right)_\rho
\approx 0 \quad \mbox{ close to } \partial I\,,
\end{equation}
which is clearly satisfied by \eqref{eq:solxpi}. 
In fact, the difference between the two formulations can be clearly seen in
practice. Let $(\mathcal Q^{h,\ttau}_{\partial_0})$ be the scheme
$(\mathcal Q^{h,\ttau})$ with $\Vh$ replaced by $\Vhpartialzero$, i.e.\ the
obvious generalisation of the scheme to the case of $I=(0,1)$.
We now compare the behaviour of this adapted scheme to $(\mathcal
P^{h,\ttau})$, with particular attention to the movement of the vertices near 
the boundary. To this end, let the initial surface be given by disc of 
dimension $9\times1\times9$. Under mean curvature flow, the disc shrinks to a 
nearly spherical shape, and then shrinks to a point, 
see Figure~\ref{fig:flatcigar}.
For the scheme $(\mathcal P^{h,\ttau})$, with discretization parameters 
$J=128$ and $\ttau = 10^{-4}$,
we observe that the distribution of vertices close to the boundary remains
uniform, as is to be expected from (\ref{eq:unif}). Further away
from the boundary the density of vertices increases, which leads to a 
moderate increase of the ratio \eqref{eq:ratio} over time. Close to the
extinction time that ratio reduces again.
The scheme $(\mathcal Q^{h,\ttau}_{\partial_0})$, on the other hand, exhibits
a very nonuniform distribution of vertices close to the boundary, with the
length of the elements increasing as the boundary is approached. Of course,
this is in line with the analysis in (\ref{eq:nonunif}).
Overall, this leads to a far more pronounced increase in the ratio 
\eqref{eq:ratio} over time, compared to the scheme $(\mathcal P^{h,\ttau})$.
\begin{figure}
\center
\includegraphics[angle=-90,width=0.4\textwidth]{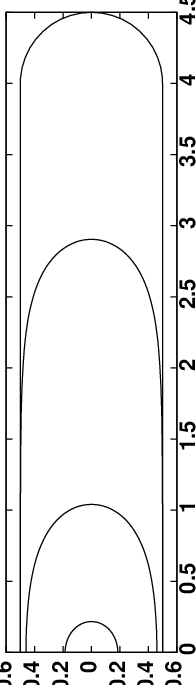} 
\includegraphics[angle=-90,width=0.24\textwidth]{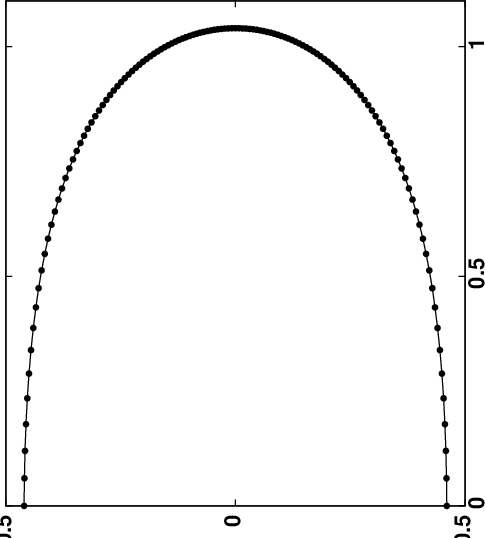} 
\includegraphics[angle=-90,width=0.24\textwidth]{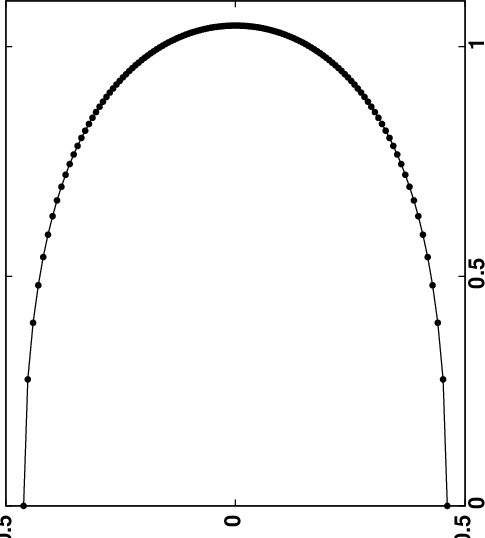} 
\includegraphics[angle=-90,width=0.4\textwidth]{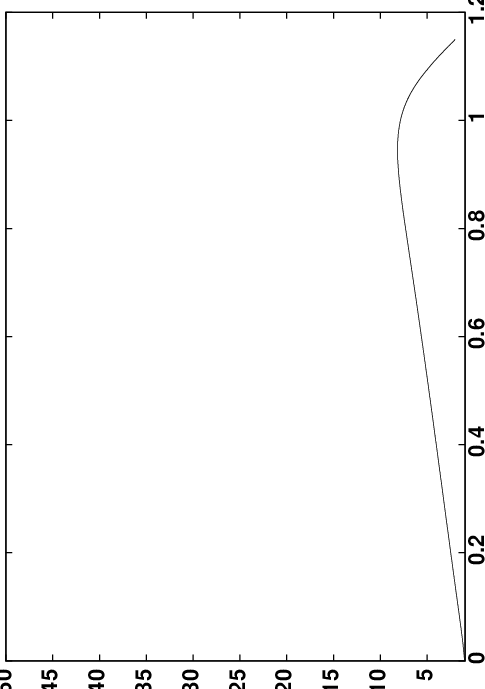} 
\includegraphics[angle=-90,width=0.4\textwidth]{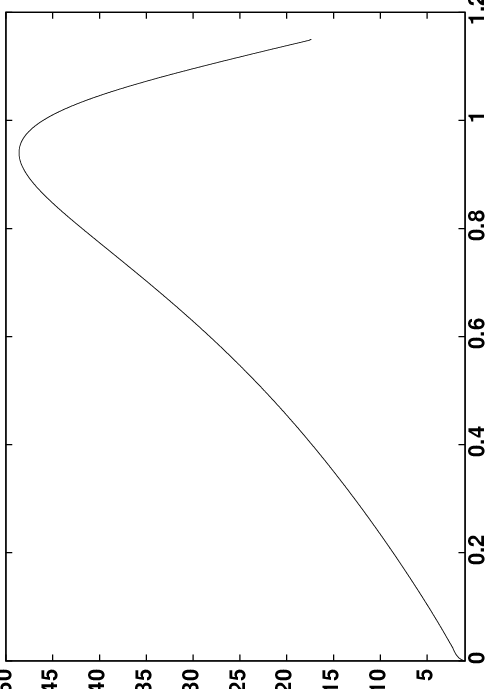}
\caption{
Evolution for a disc of dimension $9\times1\times9$.
The first row shows the solution at times $t=0,0.5,1,1.15$ for the scheme 
$(\mathcal P^{h,\ttau})$. Moreover, we present plots of the
distribution of vertices on $\Gamma^m$ at time $t=1$, for the scheme 
$(\mathcal P^{h,\ttau})$ (middle), as well as for the adapted scheme 
$(\mathcal Q^{h,\ttau}_{\partial_0})$ (right).
Below we show plots of the ratio $\ratio^m$ over time for 
$(\mathcal P^{h,\ttau})$, left, and for 
$(\mathcal Q^{h,\ttau}_{\partial_0})$, right.
}
\label{fig:flatcigar}
\end{figure}%

We end this section with an experiment for an initial dumbbell shape. 
For the experiment in Figure~\ref{fig:dumbbell} we used the
discretization parameters $J=512$ and $\ttau=10^{-4}$.
During the evolution the neck of the dumbbell is thinning, until this 
eventually leads to pinch-off, one of the singularities that can occur for mean
curvature flow.
\begin{figure}
\center
\includegraphics[angle=-90,width=0.14\textwidth]{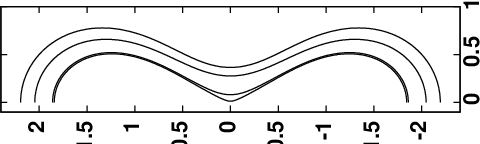} 
\qquad
\includegraphics[angle=-90,width=0.16\textwidth]{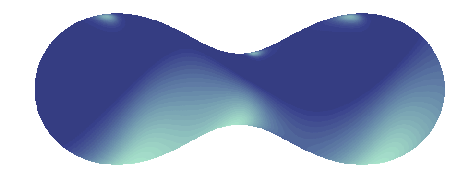} 
\includegraphics[angle=-90,width=0.16\textwidth]{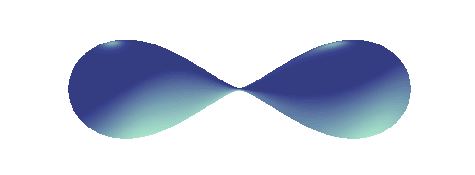} 
\caption{
Evolution for a dumbbell. Solution at times $t=0,0.05,0.1,0.104$.
We also visualize the axisymmetric surfaces generated by
$\Gamma^m$ at times $t=0$ and $t=0.104$.
}
\label{fig:dumbbell}
\end{figure}%

\section*{Acknowledgements}
We thank Yakov Berchenko-Kogan for helping us improve the
presentation and discussion of Figure~\ref{fig:torus_rG}.

\end{document}